\definecolor{babyblueeyes}{rgb}{0.63, 0.79, 0.95}
\newtheorem{theorem}{Theorem}[section]
\newtheorem{proposition}[theorem]{Proposition}
\newtheorem{lemma}[theorem]{Lemma}
\newtheorem{corollary}[theorem]{Corollary}
\newtheorem{problem}[theorem]{Problem}
\newtheorem{obs}[theorem]{Observation}
\title{Intersecting and $2$-intersecting hypergraphs with maximal covering number: the Erdős-Lovász theme revisited}
\author{J\'anos Bar\'at\thanks{Supported by Sz\'echenyi 2020 under the
    EFOP-3.6.1-16-2016-00015 and 
OTKA-ARRS Slovenian-Hungarian Joint Research Project, grant no. NN-114614 and the grant of the 
Hungarian Ministry for Innovation and Technology (Grant Number: NKFIH-1158-6/2019).}\\
\small University of Pannonia, Department of Mathematics\\[-0.8ex]
\small 8200 Veszpr\'em, Egyetem utca 10., Hungary\\[-0.8ex]
\small and\\
\small  MTA-ELTE Geometric and Algebraic Combinatorics Research Group\\[-0.8ex]
\small H--1117 Budapest, P\'azm\'any P.\ s\'et\'any 1/C, Hungary\\[-0.8ex]
\small \texttt{barat@mik.uni-pannon.hu}}
\begin{document}

\maketitle

\begin{abstract}
Erdős and Lovász noticed that an $r$-uniform intersecting hypergraph $H$ with maximal covering number,
that is $\tau(H)=r$, must have at least  $\frac{8}{3}r-3$ edges.
There has been no improvement on this lower bound for 45 years.
We try to understand the reason by studying some small cases to see whether the truth lies very close to this 
simple bound.
Let $q(r)$ denote the minimum number of edges in an intersecting $r$-uniform hypergraph.
It was known that $q(3)=6$ and $q(4)=9$.
We obtain the following new results: 
The extremal example for uniformity 4 is unique. 
Somewhat surprisingly it is not symmetric by any means.
For uniformity 5, $q(5)=13$, and we found 3 examples, none of them being some known graph.
We use both theoretical arguments and computer searches. 
%A standard tool is nauty by Brendan McKay.
In the footsteps of Erdős and Lovász, we also consider the special case, when the hypergraph is part of a finite
projective plane. 
We determine the exact answer for $r\in \{3,4,5,6\}$.
For uniformity 6, there is a unique extremal example.

%In case of intersecting $r$-partite hypergraphs, Ryser's conjecture
%claims the maximum covering number can be $r-1$.
%Recently, there has been accelerated interest in finding sharp examples.
%Király and Tóthmérész and independently Bustamante and Stein conjectured the following $t$-intersecting generalisation:
%If $H$ is an $r$-partite hypergraph and any two hyperedges intersect in at least $t$ vertices, then
%$\tau(H)\le r-t$.
In a related question, we try to find $2$-intersecting $r$-uniform hypergraphs with maximal covering number, 
that is $\tau(H)=r-1$.
An infinite family of examples is to take all possible $r$-sets of a $(2r-2)$-vertex set.
There is also a geometric candidate: biplanes. These are symmetric 2-designs with $\lambda=2$.
We determined that only 3 biplanes of the 18 known examples are extremal.
\end{abstract}

Keywords: intersecting hypergraph, cover, projective plane, combinatorial design, biplane

\section{Introduction}

A {\em hypergraph} consists of vertices and edges, where edges are subsets of vertices.
We use $|H|$ or $m$ to denote the number of edges
(also called lines) and $|V(H)|$ or $n$ the number of vertices
(also called points) of $H$. 
A hypergraph is {\it $r$-uniform} if every line has $r$ points on it. 
A hypergraph is {\it intersecting} if any two edges have a vertex in common.
In this paper we only consider intersecting, uniform hypergraphs.
A {\em projective plane} is an $r$-uniform intersecting hypergraph on $r^2-r+1$ vertices such that there is 
precisely one line through any pair of points.
%We use $\pp{r}$ to denote any $r$-uniform projective plane. 
In the standard terminology of projective planes,
this has {\it order} $r-1$. For instance, $PG(2,r-1)$ is such an example if $r-1$ is a prime power.
A {\em $k$-cover} of a hypergraph is a set of $k$ vertices meeting
every edge of the hypergraph.  The {\it covering number} $\tau(H)$ of
a hypergraph $H$ is the minimum $k$ for which there is a $k$-cover of $H$.
Since covering is our main subject, it makes no difference to repeat an edge. 
Therefore, we only consider {\em simple} hypergraphs.
A hypergraph is {\it $r$-partite} if its vertex set $V$ can be
partitioned into $r$ sets $V_1,\dots, V_r$, called the {\it sides} of
the hypergraph, so that every edge contains precisely one vertex from
each side.  In particular, $r$-partite hypergraphs are $r$-uniform.

For intersecting hypergraphs, any edge is a cover.
Therefore, if $H$ is an $r$-uniform intersecting hypergraph, then $\tau\le r$.
Erd\H os and Lovász initiated the study of extremal examples \cite{erdos-lovasz}.
That is, the hypergraphs with $\tau=r$.
For every $r$, where $r-1$ is a prime power, there are at least two different examples:
first the $r$-element subsets of a $2r-1$ element ground set, usually denoted by $\binom{2r-1}{r}$,
second the projective planes of uniformity $r$. 
Among other things, Erd\H os and Lovász asked the following extremal question.
What is the minimum number of edges $q(r)$ that an intersecting $r$-uniform hypergraph $H$ with $\tau(H)=r$ can have?
They proved a lower bound of $\frac{8}{3}r-3$ by a simple argument.
They also asked whether there is a linear upper bound and Erdős offered 500\$ for a proof.
Kahn confirmed that a linear example exists \cite{kahn2}.

Tripathi\cite{tri} gave a short proof that $q(3)=6$ and settled $q(4)=9$.
In this direction, we show that
the extremal example for uniformity 4 is unique. 
Somewhat surprisingly it is not symmetric by any means.
We determine the next value, and $q(5)=13$.
For uniformity 5, we found 3 examples, none of them being some known graph.
These properties might indicate why it is difficult to improve the Erdős-Lovász lower bound in general.
Our arguments combine computation and human reasoning.
The strategies are somewhat similar to that of Franceti\v c et al. \cite{fhbi}.

Erdős and Lovász asked the analogous problem in projective planes.
We determine the exact answer for $r\in \{3,4,5,6\}$.
For uniformity 5, we determine the five non-isomorhic extremal examples.
We could not recognize any of these as some {\it obvious geometric construction}.
For uniformity 6, there is a unique extremal example.
These results are falling out of a simple computer search.

For intersecting, $r$-partite hypergraphs, Ryser conjectured that $\tau\le r-1$. 
This conjecture in full generality appeared in \cite{ryserconj}.
A hypergraph is $t$-{\it intersecting} if any two edges have at least $t$ common vertices.
Recently, Király and Tóthmérész \cite{kt} and independently Bustamante and Stein \cite{bs} conjectured
the following $t$-intersecting generalisation of Ryser's conjecture:
If $H$ is an $r$-partite hypergraph and any two hyperedges intersect in at least $t$ vertices, then
$\tau(H)\le r-t$.
Bishnoi et al.\cite{tibi} proved the tight upper bound $\lfloor (r-t)/2\rfloor+1$ for the range $t\ge 2$ and $r\le 3t-1$.
Also DeBiasio et al. \cite{DeBi} considers this problem in section 8.4 in their survey-like paper.

Inspired by the development described in the previous paragraph, we initiate the study of $2$-intersecting 
hypergraphs\footnote{not necessarily $r$-partite!!},
in particular those, for which $\tau=r-1$.
Our goal is to determine the minimum number of edges of a hypergraph that belongs to this class.
We collect the fundamental properties in the next section.
We pose a provocative question whether there are only finitely many sporadic examples apart from the trivial infinite class.
A geometric hint is to study so called biplanes \cite{2pl}.
There are 18 examples known. 
We found that only 3 of them have maximal covering number.
We studied the 4- and 5-uniform case in detail and said the final word only for the 4-uniform case.

For our computational tasks, it makes good sense to think of a hypergraph $H$ as a bipartite graph $G(V,E)$, where $V$ is the vertex set of $H$
and $E$ is the edge set. A vertex $v\in V$ and a vertex $u\in E$ are adjacent in $G$ if and only if $v\in e$ in $H$.
This is the {\em Levi graph} of hypergraph $H$.
We use the $n\times m$ incidence matrix of $G$ to describe the properties of $H$.

We used the following (commercially available) configurations to execute our searches and calculate the covering number:\\
1. Intel Core i3-4150 CPU \@ 3.50GHz x 4 with 8 GiB RAM and ubuntu 16.04 64-bit operating system.\\
2. Intel Core i5-4200 CPU \@ 1.6 GHz x 4 with 8 GiB RAM and ubuntu 16.04 64-bit operating system.

At the end it was the storage capicity that made our further searches impossible.
We remark that all searches were exhaustive. 
We plan to implement some randomized searches in a different project.

\section{Preliminaries on $2$-intersecting hypergraphs}

Let us first try to modify our two standard extremal intersecting hypergraphs.
Taking all possible $r$-sets of a $(2r-2)$-vertex set results in a $2$-intersecting hypergraph with $\tau=r-1$.
On the other hand, it appears harder to get a $2$-intersecting $r$-uniform hypergraph from a projective plane, where $\tau=r-1$.
For instance, we can consider two projective planes $\pi_1$ and $\pi_2$ of the same order $q$, 
and a bijection $\phi$ between the lines. We can create a $2$-intersecting $2q+2$-uniform hypergraph, whose edges are $L\cup \phi(L)$ 
for every possible $L\in \pi_1$.
However, any line of $\pi_1$ is still a cover, hence $\tau$ remains $q+1$, which is only $r/2$.
It is worth mentioning the following construction, which gives very similar parameters.
Let the vertex set be the points of an $n\times m$ grid. That is, $V(H)=\{(i,j):1\le i\le n, 1\le j\le m\}$.
Let the hyperedges be the crosses of this grid. 
That is, $E(H)=\{C(i,j):1\le i\le n, 1\le j\le m\}$ and $C(a,b)=\{(a,j):1\le j\le m\}\cup\{(i,b):1\le i\le n\}$. 
This hypergraph $H$ is $(n+m-1)$-uniform, 2-intersecting and has $\tau=\min(n,m)$.

There is a geometric object that resembles the properties of a $2$-intersecting hypergraph:
a \em biplane \rm is a symmetric 2-design with $\lambda=2$; that is, every set of two points is contained in two blocks 
(lines), while any two lines intersect in two points \cite{2pl}. 
They are similar to finite projective planes, except that rather than two points determining one line 
(and two lines determining one point), 
two points determine two lines (respectively, points). A biplane of order $n$ is a symmetric design, where blocks have $k=n+2$ points.
We might say that $2$-intersecting uniform hypergraphs are generalisations of biplanes.
There are only 18 biplanes known \cite{2pl}.\\
The order 1 biplane geometrically corresponds to the tetrahedron. 
The vertices are the 4 points and the edges are the 4 faces. 
It is 3-uniform and no vertex covers the opposite face, therefore the covering number is 2.\\  
The order 2 biplane is the complement of the Fano plane. It has covering number 3. 
The order 3 biplane has 11 points (and lines of size 5), and is also known as the Paley biplane.
Using our small program, we checked its covering number: 4.
At this point, one might get excited to believe that all other biplanes have maximum covering number.\\
Let us examine the next case. 
There are three biplanes of order 4 (and 16 points, lines of size 6). We found the combinatorial description of two of them.
We checked these by hand, and their covering number was smaller than 5. 
Let us show one of them.
The example is the Kummer configuration. Let the points be the numbers from 1 to 16 and arrange them in a $4\times 4$ grid.
We define the lines as follows. For each element in the grid, consider the 3 other points in the same row and same column and
combine them into a 6-set. This creates 16 hyperedges, and clearly any two lines intersect in two points.
The Kummer configuration has covering number at most 4, since any 4 points in the same row form a cover.
It has covering number exactly 4, since any 3 elements of the grid leave a free row and column, which contains a line, which is uncovered.

Later we found the database \cite{disreg}, which contains the incidence matrix of various structures.
We checked all three biplanes of order 4 using our small program\footnote{We used a simple algorithm going through all possible vertex sets 
to find a cover of prescribed size and implemented in python.} and each of them has covering number 4.

We pose the following

\begin{problem}
Let $H$ be a $2$-intersecting $r$-uniform hypergraph, and let $\binom{2r-2}{r}$ be the hypergraph on $(2r-2)$ vertices and
all possible $r$-sets as edges.
The maximum value of $\tau(H)$ among all $2$-intersecting $r$-uniform hypergraphs is taken 
if $H=\binom{2r-2}{r}$.
Are there infinitely many values of $r$ such that this is the only example?
Are there infinitely many other examples?
\end{problem}

We start investigating these questions.
We partly search by computer and try to prove facts to reduce the search space.
The following two observations help.

\begin{lemma} \label{maxdeg}
  Let $H$ be an intersecting hypergraph with $e$ edges.
  If the maximum degree is $\Delta$, then $\tau\le 1 + \lceil\frac{e-\Delta}{2}\rceil$. 
  If $H$ is $r$-uniform, $\tau=r-1$, then $\Delta\le e+4-2r$ if $e-\Delta$ is even,
and $\Delta\le e+5-2r$ if $e-\Delta$ is odd.
\end{lemma}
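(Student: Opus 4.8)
The plan is to prove the first inequality directly by exhibiting a cover of the claimed size, and then to obtain the second pair of inequalities as a purely arithmetic consequence of the first.

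For the first inequality, I would begin by fixing a vertex $v$ of maximum degree $\Delta$ and placing it into the cover. This meets the $\Delta$ edges through $v$ at the cost of a single vertex, which accounts for the leading $1$. The remaining $e-\Delta$ edges all avoid $v$, and the crucial observation is that, because $H$ is intersecting, any two of them share a vertex. I would therefore group these $e-\Delta$ edges into $\lfloor (e-\Delta)/2\rfloor$ disjoint pairs (leaving at most one edge unpaired), and for each pair $\{f,g\}$ select a vertex in $f\cap g$ and add it to the cover; if a single edge is left over, I add any one of its vertices. This uses $\lceil (e-\Delta)/2\rceil$ additional vertices, and together with $v$ it meets every edge of $H$, so $\tau\le 1+\lceil (e-\Delta)/2\rceil$.

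For the second part I would substitute $\tau=r-1$ into this bound, obtaining $r-2\le \lceil (e-\Delta)/2\rceil$, and then split according to the parity of $e-\Delta$. When $e-\Delta$ is even the ceiling equals $(e-\Delta)/2$, which gives $2r-4\le e-\Delta$ and hence $\Delta\le e+4-2r$; when $e-\Delta$ is odd the ceiling equals $(e-\Delta+1)/2$, which gives $2r-4\le e-\Delta+1$ and hence $\Delta\le e+5-2r$.

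The argument carries no serious obstacle; the only point that requires care is verifying that the pairing step genuinely yields a cover, and this rests entirely on the intersecting hypothesis (in an arbitrary hypergraph two edges need not share a vertex, so the halving would break down). I would also emphasize that no uniformity is needed for the first inequality — it holds for every intersecting hypergraph — and that $r$-uniformity together with the exact value $\tau=r-1$ enters only in the second part, purely through the substitution.
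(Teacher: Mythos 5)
Your proof is correct and follows exactly the paper's argument: a vertex of maximum degree covers $\Delta$ edges, the remaining $e-\Delta$ edges are covered in pairs via the intersecting property, and the degree bounds follow by substituting $\tau=r-1$ and splitting on the parity of $e-\Delta$. The paper's own proof is a one-line sketch of precisely this construction, so your write-up simply supplies the details it leaves implicit.
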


\begin{proof}
%This holds already for intersecting hypergraphs. 
We find a cover by taking a vertex of maximum degree and covering the 
other lines in pairs.
\end{proof}

% We make another simple observation.

\begin{lemma} \label{mindeg}
Let $H$ be a $2$-intersecting $r$-uniform hypergraph.
If there exists a vertex of degree at most $r-1$, then $\tau(H)\le r-2$.
\end{lemma}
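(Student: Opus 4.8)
The plan is to start from the standard fact that any single edge covers an intersecting hypergraph, and then exploit the $2$-intersecting hypothesis to delete \emph{two} vertices from such an edge. Concretely, let $v$ be a vertex with $\deg(v)=d\le r-1$, and fix one edge $e$ through $v$. Since $H$ is $2$-intersecting, every edge $f\ne e$ satisfies $|f\cap e|\ge 2$, so $f$ meets $e\setminus\{v\}$ (it shares at least two points with $e$, and at most one of them can be $v$). Hence $e\setminus\{v\}$ is already a cover of size $r-1$, and the entire task reduces to discarding one further vertex of $e$ while remaining a cover.

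First I would pin down exactly which vertices of $e\setminus\{v\}$ are \emph{dangerous} to delete. For $u\in e\setminus\{v\}$, the set $e\setminus\{v,u\}$ fails to be a cover precisely when some edge $f$ meets $e$ only inside $\{v,u\}$; since $|f\cap e|\ge 2$ for every $f\ne e$, this forces $f\cap e=\{v,u\}$, and in particular $v\in f$. Thus every dangerous edge passes through $v$, so it lies among the at most $d-1$ edges through $v$ other than $e$. Each such edge determines at most one dangerous vertex $u$, namely the unique point of $(f\cap e)\setminus\{v\}$ when this intersection has size exactly two. Consequently the number of dangerous vertices is at most $d-1\le r-2$.

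The pigeonhole count then closes the argument: $e\setminus\{v\}$ has $r-1$ vertices, but at most $r-2$ of them are dangerous, so at least one vertex $u$ is safe. I would then check directly that $e\setminus\{v,u\}$ is a cover of size $r-2$. An edge $f$ avoiding $v$ meets $e\setminus\{v\}$ in at least two points, so it survives deletion of the single point $u$; an edge $f$ through $v$ with $f\ne e$ is not dangerous (by the choice of $u$), hence meets $(f\cap e)\setminus\{v,u\}$; and $e$ itself clearly meets $e\setminus\{v,u\}$ as soon as $r\ge 3$.

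I expect the only real content — and the step where the $2$-intersecting hypothesis does all the work — to be the observation that a dangerous edge must pass through $v$. This is what confines the obstructions to the small family of $d-1\le r-2$ edges through $v$ and makes the counting go through; with only an ordinary (i.e.\ $1$-)intersecting hypothesis the dangerous edges could meet $e$ in arbitrary single points outside $v$, and the same pruning would break down.
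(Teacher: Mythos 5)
Your proof is correct and is essentially the paper's own argument in contrapositive dress: the paper picks, for each of the at most $r-2$ other edges through $v$, a witness vertex $v_i\in e_i\cap e$ and then chooses $x\in e$ avoiding $v,v_1,\dots,v_{r-2}$, which is exactly your count that at most $r-2$ vertices of $e\setminus\{v\}$ are ``dangerous.'' Both proofs then verify that $e\setminus\{v,x\}$ covers edges through $v$ via these witnesses and edges missing $v$ via the $2$-intersecting property, so there is nothing to add.
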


\begin{proof}
Let $v$ be a vertex of degree at most $r-1$.
Let $e,e_1,\dots,e_{r-2}$ be the edges through $v$, and let their union be $U$.
Since $H$ was $2$-intersecting, there is at least one vertex different from $v$ in each of
 $e_1\cap e$, \dots, $e_{r-2}\cap e$.
 Let these vertices be $v_1,\dots,v_{r-2}$ (some of them may coincide).
 Now let $x$ be a vertex in $e$ different from each of $v,v_1,\dots,v_{r-2}$.
 We claim that $C=e\setminus\{v,x\}$ is a cover of size at most $r-2$.
 Indeed, any edge $f\notin U$ intersects $e$ in at least $2$ vertices, one of them different from $x$.
 On the other hand, any edge $f\in U$, where $v,v_i\in f$, intersects $C$ in $v_i$ for some $i$.
 Therefore any edge $f$ intersects $C$.
\end{proof}

Since we are looking for a $2$-intersecting hypergraph that satisfies $\tau=r-1$, therefore in a computer search 
we may assume the minimum degree to be at least $r$.

Recall that intersecting $r$-uniform hypergraphs with maximal covering number can have 
linearly many edges by Kahn's result \cite{kahn2}.
Now this changes dramatically for $2$-intersecting hypergraphs, if we insist that two edges cannot intersect in more than 2 vertices.

\begin{corollary} \label{quadraticmanyedges}
 If $H$ is a $2$-intersecting $r$-uniform hypergraph with maximal covering number and no 
 two edges intersect in at least $3$ vertices, then $|E(H)|\ge \frac{r^2-r}{2}+1$.
\end{corollary}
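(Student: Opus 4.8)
The plan is to derive the bound from a double-counting argument anchored at a single edge, using the minimum degree guaranteed by Lemma~\ref{mindeg}. First I would observe that since $H$ has maximal covering number $\tau(H)=r-1$, the contrapositive of Lemma~\ref{mindeg} forces every vertex to have degree at least $r$; otherwise a vertex of degree at most $r-1$ would yield $\tau(H)\le r-2$, contradicting maximality. Next I would unpack the hypothesis: $H$ is $2$-intersecting, so any two edges share at least $2$ vertices, and by assumption no two edges share $3$ or more; hence any two distinct edges meet in \emph{exactly} $2$ vertices.

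With these two facts in hand, I would fix an arbitrary edge $e=\{v_1,\dots,v_r\}$ and count the incidences between the vertices of $e$ and the remaining edges. Writing $d_i$ for the degree of $v_i$ in $H$, the number of edges other than $e$ passing through $v_i$ is $d_i-1$. Summing over the vertices of $e$ counts each other edge $f$ once for every vertex of $e$ it contains, that is, exactly $|f\cap e|=2$ times. This yields the identity
\[
\sum_{i=1}^{r}(d_i-1)=2\bigl(|E(H)|-1\bigr).
\]

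Finally I would substitute the degree bound $d_i\ge r$ into the left-hand side to get $r(r-1)\le 2\bigl(|E(H)|-1\bigr)$, which rearranges to $|E(H)|\ge \frac{r^2-r}{2}+1$, as claimed. I do not expect a genuine obstacle here, since the statement is a corollary of the preceding lemma; the only points requiring care are the reduction of the intersection hypothesis to the clean ``exactly $2$'' statement, and the decision to count incidences relative to one fixed edge rather than globally, which is precisely what makes every off-edge contribute the constant $2$ and turns the count into an exact equality rather than a mere inequality.
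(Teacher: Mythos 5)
Your proposal is correct and matches the paper's own argument: both fix one edge, invoke Lemma~\ref{mindeg} (in contrapositive) to get minimum degree at least $r$, and count incidences between the vertices of that edge and the other edges, each of which meets the fixed edge in exactly two vertices. The only difference is cosmetic — you spell out the double count as an exact identity, whereas the paper states it as a one-line inequality.
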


\begin{proof}
 Consider any edge $e$ and the vertices $\{v_1,\dots,v_r\}$ on $e$.
 Since the covering number of $H$ is $r-1$, we can apply Lemma~\ref{mindeg}.
 That is, the degree of every vertex $v_i$ is at least $r$.
 Therefore the number of edges is at least $1+\frac{(r-1)r}{2}$.
\end{proof}

We can also adapt the Erdős-Lovász idea for intersecting hypergraphs to settle a lower bound on the number of edges of a 
$2$-intersecting hypergraph.

\begin{lemma} \label{degree}
Let $H$ be a $2$-intersecting $r$-uniform hypergraph.
 If $\frac{k}{2}r+1<|E(H)|$, then there is a vertex of degree at least $k+2$.
\end{lemma}

\begin{proof}
 We fix an edge $e$. Any other edge $f$ intersects $e$ in at least 2 vertices.
 Therefore $f$ contributes at least 2 to the sum of the degrees of the vertices on $e$.
 Since $\frac{1}{2}r$ edges add at least $r$ to the total degree, the average degree raises by 1.
\end{proof}

\begin{corollary} \label{5r}
 For large enough $r$, the number of edges of a $2$-intersecting $r$-uniform hypergraph $H$ with maximal covering
 number satisfies $5r<|E(H)|$.
\end{corollary}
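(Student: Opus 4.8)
The plan is to convert the covering hypothesis $\tau=r-1$ into a lower bound on $m:=|E(H)|$ by a greedy covering argument, fed by the two quantitative inputs already in hand: every vertex has degree at least $r$ (this is exactly Lemma~\ref{mindeg}, since $\tau=r-1$ forbids a vertex of degree $\le r-1$), and in any $2$-intersecting $r$-uniform hypergraph with $g$ edges some vertex has degree at least $1+\tfrac{2(g-1)}{r}$ (the averaging inside Lemma~\ref{degree}). Set $H_1=H$, $g_1=m$; having built $H_i$ with $g_i$ edges, delete a vertex $u_i$ of maximum degree in $H_i$ and let $H_{i+1}$ be the edges of $H_i$ missing $u_i$, so $g_{i+1}=g_i-\deg_{H_i}(u_i)$.

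First I would read off both sides of the recursion. For \emph{any} $t$, the vertices $u_1,\dots,u_t$ together with one common vertex from each pair in an arbitrary pairing of the remaining $g_{t+1}$ edges form a cover, since those edges still pairwise intersect; hence $\tau\le t+\lceil g_{t+1}/2\rceil$ (the case $t=1$ is precisely Lemma~\ref{maxdeg}). Because $\tau=r-1$, this forces the edges not to be used up too fast: $g_{t+1}\ge 2(r-1-t)-1$ for every $t\le r-2$. In the other direction the deletions are large. The first removes at least $r$ edges by Lemma~\ref{mindeg}, so $g_2\le m-r$, and each later step removes at least $1+\tfrac{2(g_i-1)}{r}$ edges by Lemma~\ref{degree}, giving the geometric recursion $g_{i+1}\le(1-\tfrac2r)(g_i-1)$ and therefore $g_{t+1}\le(1-\tfrac2r)^{t-1}\bigl(m-\tfrac r2-1\bigr)-\tfrac{r-2}{2}$.

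Comparing the two estimates at a chosen $t$ gives the inequality
\[
\Bigl(1-\tfrac2r\Bigr)^{t-1}\Bigl(m-\tfrac r2-1\Bigr)\ \ge\ \tfrac{5r}{2}-2t-4,
\]
equivalently $m\ge \tfrac r2+1+(1-\tfrac2r)^{-(t-1)}\bigl(\tfrac{5r}{2}-2t-4\bigr)$. The right-hand side is maximised near $t=\tfrac34 r$, where $(1-\tfrac2r)^{-(t-1)}\to e^{3/2}$ and $\tfrac{5r}{2}-2t\to r$, yielding a bound of the form $m\ge(\tfrac12+e^{3/2}-o(1))\,r$. This is already comfortably linear and is the mechanism behind the claimed $5r$.

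The hard part will be extracting the exact constant. The clean optimisation above produces only $\tfrac12+e^{3/2}\approx 4.98$, just short of $5$, so the bookkeeping must be sharpened. The most promising routes are: (i) showing that not merely the first but the first few greedy deletions each remove close to $r$ edges — note this does \emph{not} follow directly from Lemma~\ref{mindeg}, which controls the minimum degree of $H_1$ only, since $\tau(H_2)$ is guaranteed to be just $\ge r-2$; (ii) using the integer ``$+2$'' form of Lemma~\ref{degree} in place of its averaged ``$+1$'' weakening, which in the idealised recursion drives the constant towards $e^{2}$; or (iii) replacing the crude pairing endgame by a recursive covering estimate on the residual $2$-intersecting hypergraph. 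Controlling the minimum degrees of the intermediate hypergraphs $H_i$ is thus the crux, and is exactly the ingredient that must be finessed to separate $5r$ from the $\approx 4.98r$ delivered by the bare greedy argument.
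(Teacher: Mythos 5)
Your proposal does not prove the statement, and you say so yourself: optimising $t$ in your final inequality yields only $m\ge(\tfrac12+e^{3/2}-o(1))\,r\approx 4.98\,r$, strictly short of $5r$, and the three routes (i)--(iii) you offer for closing the gap are described but not executed. That is a genuine gap, not a bookkeeping detail, and its source is identifiable: you replace Lemma~\ref{degree} by its averaged consequence ``some vertex has degree at least $1+\tfrac{2(g-1)}{r}$''. The lemma in its stepwise form says that as long as more than $\tfrac k2r+1$ edges remain, a greedily chosen vertex covers at least $k+2$ edges; inside the band $\tfrac k2r<g\le\tfrac{k+1}2r$ your averaged form only guarantees between $k+1$ and $k+2$, and compounding that loss of up to one edge per step through the whole recursion is exactly what leaves your constant below $5$.

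For comparison, the paper's proof is the stepwise (batch) version of the same greedy argument: covering the $j$-th batch of $\tfrac r2$ edges from the bottom costs about $\tfrac r{2(j+1)}$ vertices, whence the harmonic sums $\sum\tfrac1{2k}$ --- this is precisely your route (ii). What the paper does \emph{not} use is your first step, namely that Lemma~\ref{mindeg} forces minimum degree at least $r$ when $\tau=r-1$; on the batch sums alone one cleanly handles only nine batches, i.e.\ $\tfrac92r$ edges (the tenth batch pushes the sum past $1$, as the paper itself records), with the rest attributed to integer parts. It is worth recording that your first step combined with the batch accounting finishes the job with room to spare: if $|E(H)|\le 5r$, delete one vertex of degree at least $r$, so at most $4r$ edges remain; the batches then cost at most $\tfrac r2\bigl(\tfrac13+\tfrac14+\cdots+\tfrac19\bigr)+O(1)$ vertices down to $\tfrac r2+1$ edges, and pairing intersecting edges costs $\tfrac r4+O(1)$ more, for a cover of size at most $0.915\,r+O(1)<r-1$ for all large $r$, a contradiction (the same computation even excludes $|E(H)|\le\tfrac{11}2r$). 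So your skeleton is sound, and your first step is a real strengthening absent from the paper's own argument, but as submitted the proposal stops short of the statement; carrying out your route (ii) is the concrete missing step.
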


\begin{proof}
 We can cover the last $\frac{1}{2}r$ edges by $\frac{1}{4}r$ vertices of degree 2.
 Before that, we can cover $\frac{1}{2}r$ edges by $\frac{1}{6}r$ vertices of degree 3 etc.
 Since $\displaystyle\sum_{2}^{10}\frac{1}{2k}<1$, we can greedily cover more than $5r$ edges with less than $r-1$ vertices.
 On the other hand $\displaystyle\sum_{2}^{11}\frac{1}{2k}>1$. 
 Notice for small $r$ all the integer parts make a substantial contribution.
\end{proof}

As a warm-up, we can clear the $3$-uniform case.
Let $e_1$ and $e_2$ be two different edges of a 2-intersecting $3$-uniform hypergraph $H$.
Let $\{u,v\}=e_1\cap e_2$.
Let $x$ be the point in $e_1\setminus e_2$ and $y$ the vertex in $e_2\setminus e_1$.
If every other edge intersects $e_1$ in $\{u,v\}$, then the covering number is only 1.
Since our target is covering number 2, there exists another edge $f$ that intersects $e_1$ in 2 points, say $x$ and $u$.
Now $f$ must intersect $e_2$ in two points.
Therefore $f=\{x,u,y\}$.
Still $u$ is a cover of size one. 
There must be an edge $g$ intersecting $e_1$ in $x$ and $v$ and not containing $u$.
Now $g$ must intersect $f$ in 2 points. Therefore $y\in g$ and $g=\{x,y,v\}$.
So far, we constructed $\binom{4}{3}$.
Clearly, we cannot add any new edge to this hypergraph keeping all requirements.

\begin{proposition}
 There is only one $3$-uniform $2$-intersecting hypergraph with maximum covering number, 
 namely $\binom{4}{3}$ the biplane of order $1$.
\end{proposition}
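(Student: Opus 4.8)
The plan is to run a short forcing argument, leaning on the fact that for $r=3$ any two distinct edges of a $2$-intersecting hypergraph meet in \emph{exactly} two vertices: at least two by hypothesis, and not all three since distinct $3$-sets cannot coincide. By Lemma~\ref{mindeg} we may assume every vertex has degree at least $3$, and the target $\tau=2$ means that no single vertex is a cover. I would open by fixing two edges $e_1,e_2$ with $e_1\cap e_2=\{u,v\}$ and naming the private vertices $x\in e_1\setminus e_2$ and $y\in e_2\setminus e_1$, so $e_1=\{x,u,v\}$ and $e_2=\{u,v,y\}$.

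Next I would force new edges one at a time, using $\tau=2$ as the driving contradiction. If every edge contained both $u$ and $v$, then $\{u\}$ alone would be a cover, so some edge must meet $e_1$ in a pair other than $\{u,v\}$; after relabeling, say $\{x,u\}$. Demanding that this edge also meet $e_2$ in two vertices pins its third vertex to $y$, giving $f=\{x,u,y\}$. Now $u$ lies on all three edges constructed so far, so $\tau=2$ again forces an edge $g$ avoiding $u$; that edge meets $e_1$ in $\{x,v\}$, and meeting $f$ in two vertices forces $g=\{x,v,y\}$. At this point the four edges are precisely the four $3$-subsets of $\{x,u,v,y\}$, i.e.\ $\binom{4}{3}$.

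Finally I would establish maximality: no further edge can be added. Any edge contained in $\{x,u,v,y\}$ duplicates one already present, so a hypothetical new edge $h$ must use an outside vertex $w$ and therefore have only two vertices inside $\{x,u,v,y\}$. For $h$ to meet each of the four triangles in two points, those two inside vertices would have to sit simultaneously in $\{x,u,v\}\cap\{u,v,y\}=\{u,v\}$ and in $\{x,u,y\}$, which is impossible since $v\notin\{x,u,y\}$. Combined with the routine check that $\binom{4}{3}$ is itself $2$-intersecting with $\tau=2$, this gives uniqueness. I do not expect genuine difficulty anywhere; the only real obstacle is bookkeeping, namely justifying that the ``relabeling'' reductions are without loss of generality and that the maximality step truly rules out every placement of the outside vertex.
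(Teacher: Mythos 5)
Your proposal is correct and follows essentially the same forcing argument as the paper: fix $e_1,e_2$ meeting in $\{u,v\}$, use the failure of $\{u\}$ (and of $\{u,v\}$-containment) as a $1$-cover to force the edges $\{x,u,y\}$ and $\{x,v,y\}$, and conclude the hypergraph is $\binom{4}{3}$. Your only addition is to spell out the final maximality check (that no edge using an outside vertex can $2$-intersect all four triangles), which the paper dismisses with ``clearly''; that explicit verification is sound.
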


\section{$4$-uniform case}

In this section, we try to find a 2-intersecting 4-uniform hypergraph different from $\binom{6}{4}$ that has covering number 3.
We may assume that $n\ge 7$, and consider a $4$-uniform hypergraph $H$ that is $2$-intersecting and satisfies $\tau=3$.
We think of $H$ as an incidence matrix, where the rows correspond to vertices and columns to edges of $H$. 
Each column of the incidence matrix contains precisely four $1$s.
Since $\tau > 2$, we notice that for every pair of rows $r_1$ and $r_2$, 
there must be a column that contains $0$ in row $r_1$ and $r_2$.
Therefore, we may assume that the first four rows start as follows:

$\begin{array}{rrrrrrr}
   & x & y & z & z'& y'& x'\\
 1 & 0 & 0 & 0 & 1 & 1 & 1\\
 1 & 0 & 1 & 1 & 0 & 0 & 1\\
 1 & 1 & 0 & 1 & 0 & 1 & 0\\
 1 & 1 & 1 & 0 & 1 & 0 & 0\\
\end{array}$

Now the first column contains only $0$s after row 4, and columns $x$ to $x'$ must contain two $1$s below row 4.
Since $x$ and $x'$ correspond to hyperedges, they must intersect in two vertices. 
We may assume that the corresponding $1$s lie in row 5 and 6.
Similarly $y$ and $y'$ must intersect in two vertices, 
so the corresponding columns contain two 1s in the same two rows below row 4.
However, they must also intersect $x$ and $x'$ in at least two vertices.
There are two possibilities here, as shown in the next two incomplete matrices.

$\begin{array}{rrrrrrr}
   & x & y & z & z'& y'& x'\\
 1 & 0 & 0 & 0 & 1 & 1 & 1\\
 1 & 0 & 1 & 1 & 0 & 0 & 1\\
 1 & 1 & 0 & 1 & 0 & 1 & 0\\
 1 & 1 & 1 & 0 & 1 & 0 & 0\\
 0 & 1 & 1 &   &   & 1 & 1\\
 0 & 1 & 1 &   &   & 1 & 1\\
 \end{array}$
\quad or
$\begin{array}{rrrrrrr}
   & x & y & z & z'& y'& x'\\
 1 & 0 & 0 & 0 & 1 & 1 & 1\\
 1 & 0 & 1 & 1 & 0 & 0 & 1\\
 1 & 1 & 0 & 1 & 0 & 1 & 0\\
 1 & 1 & 1 & 0 & 1 & 0 & 0\\
 0 & 1 & 1 &   &   & 1 & 1\\
 0 & 1 & 0 &   &   & 0 & 1\\
 0 & 0 & 1 &   &   & 1 & 0\\
\end{array}$

In the first matrix, $z$ and $z'$ must contain 1s in the same two rows below row 4.
Assume to the contrary that these two rows are row 5 and 6.
In that case, row $i$ and $j$ correspond to two vertices covering $H$ for any $1\le i\le 4$ and $5\le j\le 6$.
Therefore, we must add a column that contains 0 in those pair of rows.
To achieve a $2$-intersecting hypergraph, all other entries must be 1s.
In this way we get $\binom{6}{4}$.

Now we may assume that $z$ and $z'$ contain 0 in row 5.
As a consequence they contain 1 in row 6, and we may assume that they have a 1 in row 7.
At this point, vertex 6 and vertex $i$ corresponds to a 2-cover of the current hypergraph for $1\le i\le 4$.
Let us fix $i=2$. 
There must be another edge $f$ corresponding to the next column, which contains 0 in rows 2 and 6.
Comparing $y$ and $f$, we conclude that $f$ must contain 1 in row 4 and 5.
Comparing now $z$ and $f$ we conclude the last 1s must be in row 3 and 7.
Now $x'$ and $f$ intersect only in 1 vertex, a contradiction.

$\begin{array}{rrrrrrrr}
   & x & y & z & z'& y'& x'& f\\
 1 & 0 & 0 & 0 & 1 & 1 & 1 & 0\\
 1 & 0 & 1 & 1 & 0 & 0 & 1 & 0\\
 1 & 1 & 0 & 1 & 0 & 1 & 0 & 1\\
 1 & 1 & 1 & 0 & 1 & 0 & 0 & 1\\
 0 & 1 & 1 & 0 & 1 & 1 & 1 & 1\\
 0 & 1 & 1 & 1 & 1 & 1 & 1 & 0\\
 0 & 0 & 0 & 1 & 1 & 0 & 0 & 1
 \end{array}$

Now there are only three non-isomorphic ways to continue the second matrix in the previous figure.
The first one gives us the complement of the Fano plane:

$\begin{array}{rrrrrrr}
   & x & y & z & z'& y'& x'\\
 1 & 0 & 0 & 0 & 1 & 1 & 1\\
 1 & 0 & 1 & 1 & 0 & 0 & 1\\
 1 & 1 & 0 & 1 & 0 & 1 & 0\\
 1 & 1 & 1 & 0 & 1 & 0 & 0\\
 0 & 1 & 1 & 0 & 0 & 1 & 1\\
 0 & 1 & 0 & 1 & 1 & 0 & 1\\
 0 & 0 & 1 & 1 & 1 & 1 & 0\\
\end{array}$
\quad or
$\begin{array}{rrrrrrr}
   & x & y & z & z'& y'& x'\\
 1 & 0 & 0 & 0 & 1 & 1 & 1\\
 1 & 0 & 1 & 1 & 0 & 0 & 1\\
 1 & 1 & 0 & 1 & 0 & 1 & 0\\
 1 & 1 & 1 & 0 & 1 & 0 & 0\\
 0 & 1 & 1 & 1 & 1 & 1 & 1\\
 0 & 1 & 0 & 1 & 1 & 0 & 1\\
 0 & 0 & 1 & 0 & 0 & 1 & 0\\
\end{array}$
\quad or
$\begin{array}{rrrrrrr}
   & x & y & z & z'& y'& x'\\
 1 & 0 & 0 & 0 & 1 & 1 & 1\\
 1 & 0 & 1 & 1 & 0 & 0 & 1\\
 1 & 1 & 0 & 1 & 0 & 1 & 0\\
 1 & 1 & 1 & 0 & 1 & 0 & 0\\
 0 & 1 & 1 & {\color{red}1} & {\color{red}1} & 1 & 1\\
 0 & 1 & 0 & {\color{red}0} & {\color{red}0} & 0 & 1\\
 0 & 0 & 1 & {\color{red}0} & {\color{red}0} & 1 & 0\\
 0 & 0 & 0 & {\color{red}1} & {\color{red}1} & 0 & 0\\
\end{array}$

In the second and third matrix, row 4 and 5 contradicts to the property that there must be a column, where both entries are 0.
Therefore, we should add more columns to our matrix, that is more edges to our current hypergraph.
However, this is impossible, as shown by the following argument. 
Any new column should contain at least two $1$s in the first four rows.
%
%Every edge must contain at least two 1s in the first four rows.
If any edge $f$ contains precisely two 1s in the first four rows, then it must coincide with one of the six edges $x$ to $x'$.
Therefore any new edge $f$ must contain three 1s in the first three rows and 0 in row 4 and 5.
Now the partial edge $f$ intersects $y$ and $z'$ in only 1 vertex.
However the remaining 1 of $f$ cannot intersect both $y$ and $z'$, since they are disjoint below row 5.

We conclude:

\begin{proposition}
 There are precisely two non-isomorphic $4$-uniform $2$-intersecting hypergraphs that have covering number $3$,
 namely $\binom{6}{4}$ and the complement of the Fano plane.
\end{proposition}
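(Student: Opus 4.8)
The goal is to classify all 4-uniform 2-intersecting hypergraphs with covering number exactly 3. The paper's excerpt has essentially done the work via an explicit matrix construction. Let me sketch how I'd approach proving this final Proposition cleanly.

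The plan is to argue entirely at the level of the $n\times m$ incidence matrix set up above, using three hard constraints: every column carries exactly four $1$s ($4$-uniformity); every two columns share $1$s in at least two common rows ($2$-intersecting); and, since $\tau(H)>2$, every pair of rows is \emph{separated}, meaning some column carries $0$ in both (otherwise those two vertices would cover $H$). Lemma~\ref{mindeg} moreover forces every vertex to have degree at least $4$. Before the uniqueness argument I would check that the two claimed hypergraphs really belong to the class: in $\binom{6}{4}$ any two edges meet in at least $4+4-6=2$ vertices, no two vertices cover all edges while any three do, so $\tau=3$; the complement of the Fano plane is $4$-uniform on $7$ points, two complementary lines meet in $7-(3+3-1)=2$ points, and it has $\tau=3$ as the order-$2$ biplane computed earlier. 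The entire content is therefore that no third example exists.

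For uniqueness I would fix one edge $e$, put its four vertices in rows $1$--$4$, and record that each of the $\binom{4}{2}=6$ pairs among these rows must be separated by some column. Such a separating column needs two $1$s among rows $1$--$4$ to meet $e$ twice, and since it is $0$ on the separated pair those two $1$s sit in exactly the complementary pair of rows; this forces the six columns $x,y,z,z',y',x'$ with complementary pairs $\{x,x'\}$, $\{y,y'\}$, $\{z,z'\}$, each carrying two further $1$s below row $4$. Because $x,x'$ share no row among $1$--$4$, their lower $1$s must coincide, say in rows $5,6$; the same holds for $y,y'$, and the demand that $y$ meet $x$ twice forces the lower support of $\{y,y'\}$ to overlap $\{x,x'\}$. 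This yields precisely the two starting matrices above, according to whether the overlap is complete (first matrix) or partial (second matrix).

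It remains to finish each branch. In the first matrix, $z,z'$ again share no row among $1$--$4$, so their lower $1$s coincide; if they reuse rows $5,6$ then all edges live on six vertices and, patching in the separating columns needed for the pairs (row $\le 4$, row in $\{5,6\}$) while staying $2$-intersecting, one is driven onto exactly $\binom{6}{4}$; any other placement of $z,z'$ creates a column $f$ forced to meet $x'$ in a single vertex, a contradiction. In the second matrix I would enumerate, up to the symmetries fixing $e$, the three ways to complete rows $5,6,7$: one reconstructs the complement of the Fano plane, while in the other two the pair of rows $4,5$ is left unseparated, and I would show that no admissible new column can both separate rows $4,5$ and remain $2$-intersecting with all existing edges---any candidate either repeats one of $x,\dots,x'$ or is forced to meet some existing edge in only one vertex.

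The principal obstacle is the isomorphism bookkeeping, not any single computation: one must be sure that the split on $\{y,y'\}$ has exactly two cases, that the second matrix has exactly three non-isomorphic completions, and that none of the surviving matrices are isomorphic to one another. I would control this by using the stabiliser of $e$---its action permuting rows $1$--$4$ and the three complementary column pairs---to put each partial matrix into a canonical form before branching, so that every ``up to isomorphism'' reduction is backed by an explicit symmetry rather than asserted. The residual feasibility checks (that a stray column violates $2$-intersection or $4$-uniformity, and that the two outputs indeed have covering number exactly $3$) are then short finite verifications of the type displayed above.
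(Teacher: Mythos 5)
Your proposal follows the paper's own argument essentially step for step: fix one edge in rows $1$--$4$, use $\tau>2$ to force the six separating columns $x,y,z,z',y',x'$ paired by complementary supports in the first four rows, split into the two partial matrices according to how the lower supports of $\{y,y'\}$ and $\{x,x'\}$ overlap, and finish each branch by showing the first leads only to $\binom{6}{4}$ and the second only to the complement of the Fano plane, with stray columns killed by the $2$-intersection constraint. The only additions beyond the paper are the explicit verification that the two examples lie in the class and the suggestion to organize the case analysis via the stabiliser of $e$, both of which are compatible refinements rather than a different route.
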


\section{$5$-uniform case}

In this section, we seek a 2-intersecting 5-uniform hypergraph $H$.
We deduced a lower bound for $|E(H)|$ for somewhat large $r$ in Corollary~\ref{5r}.
Here, $r=5$ too small to apply the Corollary directly. 
Still we can use the same idea: that is, relate the degrees to small coverings as follows.
%Let us consider the number of edges of a $2$-intersecting $5$-uniform hypergraph $H$.

To start with, we create a 3-edge hypergraph such that it needs 2 vertices to cover all edges.
Let $V(H_1)=\{1,\dots,9\}$ and $E(H_1)=\{(1,2,3,4,5),(4,5,6,7,8),(7,8,9,1,2)\}$.
Since 4 edges can be greedily covered by taking the intersection of two pairs of edges,
Lemma~\ref{degree} can be first applied for a hypergraph with 5 edges. 
For $k=1$, it yields there must be a vertex of degree 3. The other two edges intersect, so we have a 2-cover again.
Next, for the study of 6 edges, we use the following 5-uniform hypergraph.
Let $V(H)=\{1,\dots,10\}$ and $E(H)=\{(1,2,3,4,10),(4,5,6,7,10),(7,8,9,1,10)\}$. 
This construction resembles the circular motif of {\em three hares}, see Figure~\ref{3hares}.
Now we can add the edges $(9,1,2,5,6)$, $(3,4,5,8,9)$, $(2,3,6,7,8)$ to $H$ to get the unique $2$-intersecting hypergraph
with 10 vertices and 6 edges. 
This hypergraph is 3-regular and cannot be covered with 2 vertices, hence $\tau=3$, see Figure~\ref{3hares}.

\begin{figure}[ht]
  \begin{center}
    \includegraphics[width=0.22\textwidth]{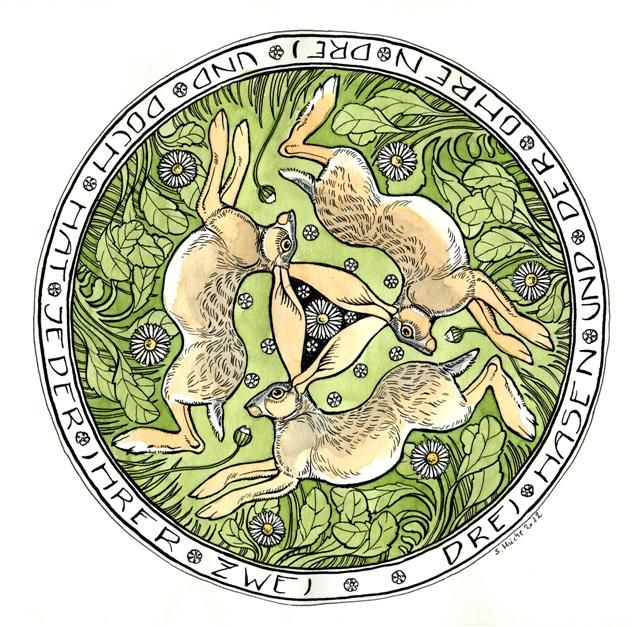}\hfil \includegraphics[width=0.2\textwidth]{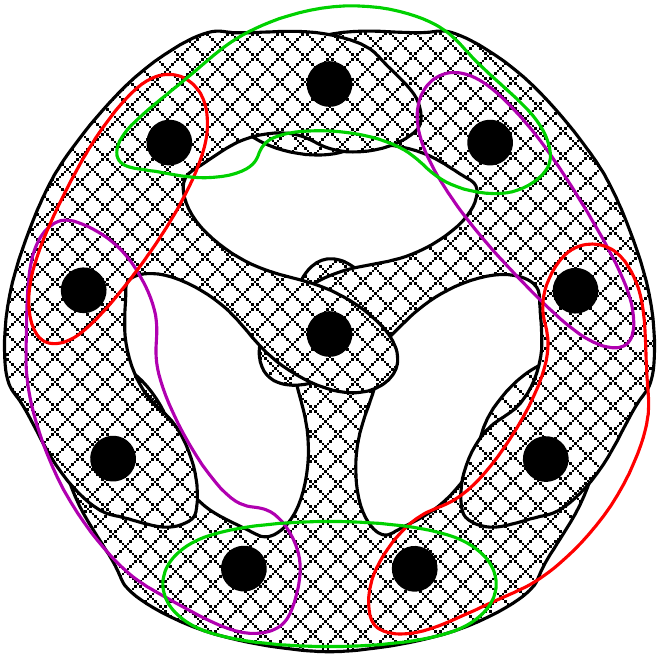}
    \caption{\label{3hares}The three hares. The smallest $2$-intersecting $5$-uniform hypergraph without a $2$-cover.}
  \end{center}
\end{figure}

%\begin{figure}[ht]
%  \begin{center}
%    \includegraphics[width=0.2\textwidth]{3haresx2.eps}
%    \caption{\label{3haresx2}The smallest $2$-intersecting $5$-uniform hypergraph without a $2$-cover.}
%  \end{center}
%\end{figure}

We next show that any $2$-intersecting $5$-uniform hypergraph with at most 10 edges can be covered by at most 3 vertices.
If there are 7 edges, then there is a vertex of degree at least 3, and the leftover 4 edges can be greedily covered by 2 vertices.
If there are 8 edges, then there is a vertex of degree at least 4 by Lemma~\ref{degree} with $k=2$. 
The remaining 4 edges can be greedily covered by 2 vertices.
If there are 9 edges, then we apply Lemma~\ref{degree} with $k=3$, and find a vertex of degree at least 5.
The remaining 4 edges can be greedily covered by 2 vertices.
If there are 10 edges, then we apply Lemma~\ref{degree} with $k=3$, and find a vertex of degree at least 5.
Now if there are 5 edges left, then we find a vertex of degree at least 3. Therefore the last 2 edges can be
covered by one of their intersection points.

%One of our goals was to find some new $2$-intersecting $r$-uniform hypergraphs with maximum covering number.
%Our attempts failed for $r=3,4$.
%We consider the next uniformity, $r=5$.
There are two examples with maximum covering number listed in the Preliminaries: the standard example $\binom{8}{5}$ with 8 vertices and 56 edges and the Paley biplane with 11 vertices and 11 edges.
One guesses there might be more examples in between.
That is, we expect the number of vertices to be 9 or 10 and the number of edges between 11 and 56.
We know that our hypothetical example must satisfy the following properties:
Every vertex has degree at least 5 and at most $e-6$ by Lemma~\ref{maxdeg}.

Therefore we made a few searches with 9 and 10 vertices. 
We looked for hypergraphs that satisfy the conditions mentioned in the previous paragraph. 
We summarize our findings in the following table.\footnote{It took 28.1 hours to generate and 13 hours to check the covering number of the
examples with 9 vertices and 15 edges}

\begin{tabular}[]{|r|r|r|r|r|r|r|}
 \hline
 n$\backslash$ e & 11 & 12 & 13 & 14 & 15 & 16\\
 \hline
 9   &    &    &    &    & 134.252.822 & \\
 \hline
 10  &    &  1 & 17 & 462& 7965 & 196.514 \\
% \hline
% 11  & 1  &   &   &   &   & \\
 \hline
 \end{tabular}

 \bigskip

As one can see, there are many candidates. 
However, checking their covering number, none of them was an extremal example.
We continued the search with increasing number of edges, but it was impossible to get an exhaustive result 
within reasonable time frame.

\section{Biplanes of order 7, 9 and 11}

We found the list of known biplanes on Gordon Royle's Home Page \cite{gordon}.
He gives each biplane explicitly as a list of blocks.
We refer to each example with Royle's notation.
There are four biplanes of uniformity~9 (order 7).

B9A has a 7-cover $\{0,1,2,8,9,15,36\}$.

Its dual B9A$^*$ has a 7-cover $\{0,1,2,3,4,6,10\}$.

B9B has a 7-cover $\{30,31,32,33,34,35,36\}$.

We could not manually find a 7-cover of the last biplane, called B9C in Royle's list.
However, our program showed a cover in a split of a second.
Namely $\{0,1,2,3,4,20,33\}$.
It took more time (4-5 minutes) for the program to show that indeed 7 is the covering number. 

There are five biplanes of uniformity 11.

B11A has a $9$-cover $\{0,1,2,3,4,5,18,19,40\}$.

B11B has a $9$-cover $\{0,1,3,4,6,10,11,13,52\}$.

B11C has a $9$-cover $\{0,1,2,3,4,5,8,13,20\}$.

B11D has a $9$-cover $\{0,1,2,3,4,9,10,28,55\}$.

B11E has a $9$-cover $\{0,1,2,3,4,5,6,9,20\}$.

There is only one biplane of uniformity 13 known.

B13A has an $11$-cover $\{0,1,2,3,4,5,9,13,16,40,42\}$\footnote{It took 14 minutes to find an 
$11$-cover by the cover calculator.}.

\noindent The conclusion is simple: the known biplanes of order 7, 9 and 11 do not have maximal covering number.

\section{Erd\H os-Lovász lower bound for intersecting hypergraphs} \label{elbev}

Erd\H os and Lovász proved in \cite{erdos-lovasz} the following lower bound for intersecting $r$-uniform hypergraphs with maximum
covering number: $q(r)\ge \frac{8}{3}r-3$, where $q(r)$ denotes the minimum number of edges in an 
intersecting $r$-uniform hypergraph with maximal covering number.
There has been no improvement on this bound for 45 years.
We use the following simple facts numerous times:

\begin{obs} \label{1mindeg}
 An intersecting $r$-uniform hypergraph with maximal covering number has minimum degree at least $2$.
\end{obs}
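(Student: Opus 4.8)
The plan is to argue by contradiction: assuming a vertex of degree exactly $1$ exists, I would produce a cover of size $r-1$, which contradicts $\tau(H)=r$. (I implicitly discard isolated vertices of degree $0$, which play no role in covering and which we may assume do not occur, since a vertex lying in no edge can be deleted without changing $H$ as a set system.)

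First I would let $v$ be a vertex of degree $1$ and let $e$ be the unique edge containing it. The natural candidate for a small cover is $C=e\setminus\{v\}$, a set of $r-1$ vertices, and I then need to verify that $C$ meets every edge of $H$. The edge $e$ itself clearly meets $C$ since $|C|=r-1\ge 1$. For any other edge $f\neq e$, the intersecting property gives $f\cap e\neq\emptyset$; because $v$ has degree $1$ it lies in no edge other than $e$, so $v\notin f$, and hence the nonempty set $f\cap e$ is contained in $e\setminus\{v\}=C$. Thus $f$ meets $C$ as well.

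This shows $C$ is a cover of size $r-1$, so $\tau(H)\le r-1$, contradicting the maximality $\tau(H)=r$; therefore no vertex has degree $1$ and the minimum degree is at least $2$. I do not expect a genuine obstacle here, since the argument is essentially a single observation. The only point that needs a moment's care is recognizing that the degree-$1$ hypothesis is exactly what forces $v\notin f$ for every other edge $f$, which is precisely what allows us to delete $v$ from $e$ without destroying any intersection; this is the whole content of the statement.
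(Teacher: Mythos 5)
Your proof is correct and follows exactly the paper's argument: take the unique edge $e$ through a degree-$1$ vertex $v$ and observe that $e\setminus\{v\}$ is an $(r-1)$-cover, contradicting $\tau(H)=r$. The paper states this in two lines; your write-up merely makes explicit the verification (that $v\notin f$ for every other edge $f$ forces $f\cap e\subseteq e\setminus\{v\}$), so there is nothing to add.
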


\begin{proof}
 Assume to the contrary that $e$ is a hyperedge containing a vertex $v$ of degree $1$.
 Now the vertices of $e$ except $v$ form a cover of size $r-1$.
\end{proof}

\begin{obs} \label{1maxdeg}
   Let $H$ be an intersecting hypergraph with $e$ edges.
  If the maximum degree is $\Delta$, then $\tau\le 1 + \lceil\frac{e-\Delta}{2}\rceil$.\\
  If $H$ is $r$-uniform and $\tau=r$, then $\Delta\le e-2(r-1)$ if $e-\Delta$ is even, and $\Delta\le e-2(r-1)+1$ otherwise.
\end{obs}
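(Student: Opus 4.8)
The plan is to prove the two assertions in sequence: the first is a covering construction, and the second is a purely arithmetic consequence of the first together with the hypothesis $\tau = r$.

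First I would establish the bound $\tau \le 1 + \lceil \frac{e-\Delta}{2}\rceil$ by exhibiting an explicit cover, exactly as in Lemma~\ref{maxdeg}. Fix a vertex $v$ of maximum degree $\Delta$ and place it into the cover; it meets the $\Delta$ edges through it. The remaining $e - \Delta$ edges must still be covered, and here I use that $H$ is intersecting: partition these edges into pairs (with one leftover edge if $e - \Delta$ is odd), and for each pair select a common vertex, which exists because any two edges meet. A single leftover edge is covered by any one of its vertices. This adds at most $\lceil \frac{e-\Delta}{2}\rceil$ vertices, giving a cover of size $1 + \lceil \frac{e-\Delta}{2}\rceil$ and hence the claimed inequality.

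Next, assuming $H$ is $r$-uniform with $\tau = r$, I would substitute into the bound just proved to get $r \le 1 + \lceil \frac{e-\Delta}{2}\rceil$, that is $r - 1 \le \lceil \frac{e-\Delta}{2}\rceil$. A short case analysis on the parity of $e - \Delta$ then finishes the argument. When $e - \Delta$ is even the ceiling equals $\frac{e-\Delta}{2}$, so $2(r-1) \le e - \Delta$, which rearranges to $\Delta \le e - 2(r-1)$. When $e - \Delta$ is odd the ceiling equals $\frac{e - \Delta + 1}{2}$, so $2(r-1) \le e - \Delta + 1$, giving $\Delta \le e - 2(r-1) + 1$.

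There is no serious obstacle here; the argument is elementary and mirrors Lemma~\ref{maxdeg}. The only point requiring care is the pairing step: one must verify that each chosen pair of uncovered edges genuinely shares a vertex, which is guaranteed by the intersecting property, and then handle the odd leftover edge correctly. This is precisely why the ceiling rather than exact halving appears in the first bound, and it is what ultimately produces the parity split in the degree estimate.
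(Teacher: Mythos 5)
Your proof is correct and follows the same approach as the paper: place a maximum-degree vertex in the cover and then greedily cover the remaining $e-\Delta$ edges in pairs via the intersecting property, after which the degree bound follows by substituting $\tau=r$ and splitting on the parity of $e-\Delta$. The paper's own proof is just a one-line sketch of exactly this argument; you have simply written out the details it leaves implicit.
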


\begin{proof}
 We can build a cover using the vertex of maximum degree and greedily taking the intersection of two lines.
\end{proof}

\subsection{The $4$-uniform case} \label{el4}

The lower bound is $\lceil\frac{8}{3}r-3\rceil=8$.
However, this bound cannot be achieved, as it was first shown by Tripathi \cite{tri}.
He also constructed an example with 9 edges.
In what follows, we argue that the example is unique.
%As we see next, this cannot be achieved.

We searched for an intersecting, $4$-uniform hypergraph with 9 edges and satisfying the necessary
degree conditions, and found the following:

\bigskip

\begin{tabular}[]{|r|r|r|r|r|r|}
\hline
 \#vertices & \#edges & \#hypergraphs & file size      & adjacency matrix size & \#cover max \\
 \hline
 9 & 9 &   91 &    2548 &    33.024 & 0\\
10 & 9 & 3295 & 102.145 & 1.326.778 & 0\\
11 & 9 & 1592 &  54.128 &   704.149 & 1\\
12 & 9 &   51 &   1.887 &    24.624 & 0\\
13 & 9 &    2 &      82 &     1.052 & 0\\
  \hline
 \end{tabular}

\bigskip

Less than 9 vertices would contradict the hand-shake lemma.
%More than 13 vertices leads to a contradiction.

Assume to the contrary that a $4$-uniform intersecting $H$ with maximal covering number existed with 9 edges and 
at least 14 vertices.
Let us double count the intersecting ordered pairs of edges. 
There are $9*8=72$ of them, since there are 9 edges.
On the other hand, a vertex of degree $d$ gives rise to $d(d-1)$ intersecting pairs.
The vertex degrees must add up to $4*9$ by the hand-shake lemma.
There cannot be a vertex of degree larger than 4, or two vertices of degree 4, since that would result in a 3-cover.
If there are at least 14 vertices, and a vertex of degree 4, then the number of degree 3 vertices can be at most 6 
(since the minimum degree is 2).
In that case, the intersecting pairs of edges are $1*12+6*6+7*2=62$, less than $72$.
If there are only vertices of degree 3 and 2, then there are at most 8 vertices of degree 3.
In that case, the intersecting pairs of edges are $8*6+6*2=60$, less than $72$.
If there are more vertices, then the result of the same calculation decreases.

We conclude
there exists a $4$-uniform intersecting hypergraph with 11 vertices 9 edges and covering number 4.
This is a unique example with 9 edges.
Its adjacency matrix is the following:

\bigskip

111000001

100110000

100011000

100000111

010100100

010010011

010001100

001100010

001010100

001001010

000101001

%\bigskip

%\begin{corollary}
% $q(4)=9$.
%\end{corollary}

% UNDERSTAND THE STRUCTURE

\subsection{The $5$-uniform case}

The lower bound gives $\lceil\frac{8}{3}r-3\rceil=11$. There must be at least 11 edges in $H$.
However, this very case can be excluded.

\begin{proposition}
 Any $5$-uniform intersecting hypergraph with $11$ edges can be covered by at most $4$ vertices. 
\end{proposition}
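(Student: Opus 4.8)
The plan is to argue by contradiction: suppose $H$ is $5$-uniform, intersecting, has exactly $11$ edges, and $\tau(H)=5$. Two of the preliminary observations immediately constrain the degree sequence. By Observation~\ref{1mindeg} the minimum degree is at least $2$, and by Observation~\ref{1maxdeg} the maximum degree $\Delta$ satisfies $\Delta\le 4$, since $\Delta\ge 5$ would already force $\tau\le 1+\lceil(11-5)/2\rceil=4$. Hence every vertex has degree in $\{2,3,4\}$. The engine of the whole argument is a double count of intersecting pairs: because $H$ is $5$-uniform with $11$ edges, the degrees sum to $55$, while each of the $11\cdot 10=110$ ordered pairs of distinct edges meets in at least one vertex, so $\sum_v d_v(d_v-1)\ge 110$.

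First I would dispose of the case $\Delta\le 3$, where every degree lies in $\{2,3\}$. Writing $n_2,n_3$ for the number of vertices of each degree, the identity $2n_2+3n_3=55$ and the inequality $2n_2+6n_3\ge 110$ subtract to $3n_3\ge 55$, forcing $n_3\ge 19$; but $3n_3\le 2n_2+3n_3=55$ forces $n_3\le 18$, a contradiction. Thus $H$ must contain a vertex $v$ of degree exactly $4$.

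The remaining case $\Delta=4$ is handled by a peeling argument. I would delete $v$ together with the four edges through it; the $7$ surviving edges still form an intersecting $5$-uniform hypergraph. The key claim is that this residual hypergraph has a vertex of degree at least $3$: otherwise all its degrees are at most $2$, and the same double count gives $2n_2'\ge 7\cdot 6=42$, i.e.\ at least $21$ vertices of degree $2$, whose degrees alone sum to at least $42$, exceeding the available residual degree sum $5\cdot 7=35$. Choosing such a vertex $w$ covers at least $3$ of the $7$ residual edges, leaving at most $4$ edges; since these pairwise intersect, two further vertices cover them (one per intersecting pair). Then $v$, $w$, and these two vertices form a $4$-cover, contradicting $\tau=5$.

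I expect the $\Delta=4$ case to be the crux: the delicate point is guaranteeing that the residual $7$-edge hypergraph always contains a vertex of high enough degree to keep the greedy cover within the budget of three more vertices, and the counting inequality above is precisely what closes this gap. Everything else reduces to the two elementary estimates supplied by Observations~\ref{1mindeg} and~\ref{1maxdeg} together with the degree-sum bookkeeping.
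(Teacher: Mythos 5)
Your proof is correct and takes essentially the same route as the paper's: the maximum degree is capped at $4$ by Observation~\ref{1maxdeg}, a degree-$4$ vertex is peeled off and a degree-$3$ vertex in the residual $7$-edge hypergraph yields a greedy $4$-cover, and the maximum-degree-$3$ case is killed by the same double count of intersecting pairs against the degree sum $55$. The only (welcome) difference is that you explicitly justify the existence of the degree-$3$ vertex in the residual hypergraph, a step the paper asserts without proof.
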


\begin{proof}
By Observation~\ref{1maxdeg} we may assume the maximum degree is at most 4.
Suppose to the contrary that $H$ is a $5$-uniform intersecting hypergraph with maximal covering number, 11 edges and a vertex $v_1$ of degree 4.
Notice that $H\setminus v_1$ has 7 edges, and therefore there is a vertex $v_2$ of degree 3 in $H\setminus v_1$.
However, the remaining 4 lines can be covered by 2 vertices, contradicting having maximal covering number.
Therefore, we are seeking a $5$-uniform, intersecting, maximum degree 3 hypergraph $H$ with covering number 5.
Notice, that there must be at least 19 vertices in $H$ by the hand-shake lemma, since $18*3<5*11$.
However, using the double counting argument on the intersecting pairs of edges gives a contradiction as before:
There are at most 17 vertices of degree 3, since the sum of the degrees is 55. 
In this case, the number of pairs counted at the vertices is at most $17*6+2*2=106$.
This is less than the required $11*10=110$. Therefore, 11 edges can be excluded.
\end{proof}
%Otherwise consider the auxiliary bipartite graph formed by the vertices and edges as the two classes and 
%containment corresponding to edges of the bipartite graph.
%Double counting shows that $18*3<5*11$, a contradiction.

%The search by nauty shows in 31 seconds that there is no $H$ with 19 vertices and 11 edges.
%Together with Lemma~\ref{splitting} this shows that $H$ must have at least 12 edges.

Next, we assume that $H$ is an intersecting $5$-uniform hypergraph with 12 edges and maximal covering number.
Just as in the previous paragraph, we notice that $H$ cannot have a vertex of degree 5.
The hand-shake lemma shows that maximum vertex degree 4 implies there must be at least 15 vertices.
(Otherwise $14*4<12*5$.)
We performed a computer search for a 5-uniform hypergraph with 15 vertices and 12 edges and maximum degree 4.
There are 1420568 such graphs found by nauty. 
The adjacency matrices fill up a file of size 1109773072, that is 1.1 Gigabytes.
Our covering number calculator found in 2 hours that all of these candidates have covering number at most 4.
We listed all the other search results in the following table:
To check (16,12), it took 204 hours on 4 cores, 2GHz each.

\bigskip

\hspace*{-0.8cm}\begin{tabular}[]{|c|c|r|r|r|r|c|}
 \hline
 \#vertices & \#edges & \#hypergraphs & file size      & adjacency matrix size & running time & \#cover max \\
 \hline
 15       &    12 & 1.420.568   &    86.654.648  &  1.109.773.072   &    1.5 days  &   0  \\
 16       &    12 &69.691.072   & 4.529.919.680  & 58.459.698.305   &    3.5 days         &   0  \\
 17       &    12 &41.459.911   & 2.902.193.770  & 37.178.429.064   &  9.33 days   &   0\footnotemark  \\
 18       &    12 & 1.814.037   &   136.052.775  &  1.733.108.268   &              &   0  \\
 19       &    12 &     7.483   &       598.640  &   7.594.138      & 22.5 days    &   0  \\
 20       &    12 &        39   &          1190  &      15.069      & 39 days      &   0  \\
 \hline
% 17       &    13 &   670.914   &    50.318.550  &                  & 21.5 days & 3  \\
\end{tabular}
\footnotetext{It took 2 days to check by the covering number calculator.}

\medskip

We complement the above computer results by the following 

\begin{lemma}
Every $5$-uniform, intersecting hypergraph with $12$ edges and at least $21$ vertices has covering number at most $4$. 
\end{lemma}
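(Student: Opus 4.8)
The plan is to push the same double-counting argument used for the $11$- and $15$-vertex cases to its limit. We are given a $5$-uniform intersecting hypergraph $H$ with $12$ edges, maximal covering number $\tau=5$, and at least $21$ vertices; we seek a contradiction. First I would record the constraints already established earlier in this subsection: by Observation~\ref{1maxdeg} the maximum degree satisfies $\Delta\le e-2(r-1)=12-8=4$ (and one checks that $\tau=5$ rules out $\Delta=5$), and by Observation~\ref{1mindeg} the minimum degree is at least $2$. The hand-shake lemma gives $\sum_v d(v)=5\cdot 12=60$. The key quantity is the number of ordered intersecting pairs of edges: since $H$ is intersecting, every one of the $12\cdot 11=132$ ordered pairs of distinct edges shares a vertex, so $\sum_v d(v)(d(v)-1)\ge 132$.

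The main step is to show that $132$ cannot be reached once $n\ge 21$. Write $n_4,n_3,n_2$ for the numbers of vertices of degree $4,3,2$ respectively; these are the only degrees available. The two linear relations are $4n_4+3n_3+2n_2=60$ and $n_4+n_3+n_2=n\ge 21$, while the pair count is $12n_4+6n_3+2n_2$. Subtracting twice the vertex count from the degree sum gives $2n_4+n_3=60-2n\le 60-42=18$, so degree-$3$-and-$4$ vertices are scarce. I would then maximize the pair count $12n_4+6n_3+2n_2$ subject to these constraints. Rewriting the pair count as $2\cdot 60 + (4n_4+2n_3)=120+2(2n_4+n_3)\le 120+2\cdot 18=156$ looks too weak, so the sharper route is to note $\tau=5$ forces $n_4\le 1$ (two degree-$4$ vertices would already give a small cover, exactly as in the $11$-edge proof). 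With $n_4\le 1$ the constraint $2n_4+n_3\le 18$ yields $n_3\le 18-2n_4$, and the pair count becomes $12n_4+6n_3+2n_2=2\cdot 60+(4n_4+2n_3)=120+4n_4+2n_3\le 120+4n_4+2(18-2n_4)=156$. Since this still exceeds $132$ I expect the binding improvement to come from pushing $n$ strictly above $21$ inside the same inequality: $4n_4+2n_3=4n_4+2n_3$ and $2n_4+n_3=60-2n$, so $4n_4+2n_3=2(60-2n)=120-4n$, giving pair count exactly $120+(120-4n)=240-4n\le 240-84=156$. This is the calculation I must get right, and the honest reading is that the crude bound $240-4n$ only drops below $132$ when $n\ge 27$, so the threshold $n\ge 21$ must be reached by a finer analysis of which pair counts are actually realizable rather than by this single inequality.

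The hard part, therefore, is the gap between the easy averaging bound and the stated threshold $21$. I would close it by refining the pair-count identity: the expression $\sum_v d(v)(d(v)-1)=240-4n$ is an \emph{exact} identity given the degree sum, not just a bound, so the real content is that an intersecting family of $12$ edges on $\ge 21$ vertices forces some pair of edges to be counted at two common vertices, i.e. two edges sharing $\ge 2$ points, which lets me cover that repeated pair more efficiently and contradicts $\tau=5$ by a direct covering construction rather than by counting. Concretely, from $240-4n\le 156$ and the need for $\sum_v d(v)(d(v)-1)$ to be at least the number of \emph{distinct} intersecting pairs times their multiplicity, I would argue that when $n\ge 21$ the average edge-overlap structure is so sparse that a greedy cover in the style of Observation~\ref{1maxdeg}—take the degree-$4$ vertex (if any), then repeatedly cover two uncovered edges by a shared vertex—terminates in at most $4$ vertices. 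I expect the bookkeeping that every remaining edge can be paired off (no single leftover edge) to be the genuine obstacle, and I would handle it by a short parity/leftover-edge argument mirroring the one used above for the $\le 10$-edge $5$-uniform cases, which is exactly where the precise value $21$ should emerge.
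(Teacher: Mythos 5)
Your setup (degree bounds $2\le d(v)\le 4$, degree sum $5\cdot 12=60$, and $\sum_v d(v)(d(v)-1)\ge 12\cdot 11=132$ ordered intersecting pairs) matches the paper's, but your optimization contains an algebra error that sends the rest of the argument down the wrong path. Writing $n_4,n_3,n_2$ for the number of vertices of each degree, the pair count is $12n_4+6n_3+2n_2=120+4n_4-2n_2$, \emph{not} $120+4n_4+2n_3$; eliminating variables via $2n_4+n_3=60-2n$ and $n_3\ge 0$ gives the exact form $12n_4+6n_3+2n_2=240-6n+2n_4\le 300-8n$. Hence for $n\ge 22$ the count is at most $124<132$ and counting alone already yields the contradiction — there is no need to wait until $n\ge 27$, and no need for your auxiliary claim $n_4\le 1$. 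That claim is unjustified in any case: two degree-$4$ vertices produce a $4$-cover only if they lie on no common edge, and in fact the critical configuration at $n=21$ has \emph{nine} vertices of degree $4$.

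The genuine missing idea is the boundary case $n=21$, where the corrected bound equals $132$ exactly and counting gives no contradiction. Equality forces rigidity: exactly nine vertices of degree $4$, twelve of degree $2$, and every two edges meeting in exactly one vertex. The paper finishes by dualizing this structure into a pairwise balanced design $(12|2^{12}4^9)$, invoking its uniqueness (it is the dual of $AG(2,3)$), and exhibiting a $4$-cover explicitly: the three points of an affine line $L$ together with the degree-$2$ vertex common to the two lines parallel to $L$. Your proposed substitute — a greedy pairing argument in the style of Observation~\ref{1maxdeg}, patched by a ``parity/leftover-edge'' analysis — cannot close this case: starting from a degree-$4$ vertex, greedy pairing on the remaining $8$ edges uses $1+\lceil 8/2\rceil=5$ vertices, and the $4$-cover that actually exists in the extremal configuration is not of greedy form (it consists of three degree-$4$ vertices lying on a single edge plus one degree-$2$ vertex). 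So the proposal both underestimates what the counting yields (it settles all $n\ge 22$) and lacks the design-theoretic uniqueness argument that the tight case $n=21$ genuinely requires.
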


\begin{proof}
Suppose to the contrary that such a hypergraph with covering number $5$ exists.
We consider the Levi graph of $H$, and use the following double counting argument.
Since the hypergraph is intersecting, we can calculate the number of intersecting pairs of edges as $12*11=132$. 
Here every pair is counted twice.
On the other hand, if a vertex $v$ in $V$ has degree $d$, then there are $d(d-1)$ pairs intersecting in that vertex.
In our case, every vertex degree must be between 2 and 4 by Observation~\ref{1mindeg} and \ref{1maxdeg}.
More precisely, if there was a vertex $v_1$ of degree at least $5$, then we put $v_1$ into a cover $C$. 
Now removing $v_1$ and all edges containing $v_1$ from $H$ yields a $5$-uniform, intersecting hypergraph $H'$ with 7 edges.
This subhypergraph still contains a vertex $v_2$ of degree at least $3$. We put $v_2$ into $C$.
The vertices $v_1$ and $v_2$ together cover 8 edges. 
Therefore, we can greedily extend $C$ into a cover of size $4$, a contradiction.

Since there are at least $21$ vertices, we get the maximum number of intersecting pairs,
if there are 9 vertices of degree 4 and 12 vertices of degree 2. 
If we either have more vertices or lower vertex degrees, it results in a decrease of the
intersecting pairs (since the number of edges is fixed). 
Now simple calculation shows that $9*12+12*2=132$, exactly the number of intersecting pairs of edges. 
In any other case, we have less than 132.

The 9 vertices of degree 4 and 12 vertices of degree 2 give rise to a $(12|2^{12}4^9)$ pairwise
balanced design, and there is a unique such object\footnote{
p.269 in the Handbook of Combinatorial Designs.} the dual of $AG(2,3)$. 
Therefore, there is a cover of size 4: consider 3 points on an affine line $L$ and the intersection of the two lines
parallel to $L$.
%As a computer search\footnote{Brendan McKay ran this on 500+ cores. It would have taken 74 days on our resources.} by nauty shows \cite{brendan}, 
%there is exactly one $5$-uniform intersecting hypergraph with 9 vertices of degree 4 and 12 vertices of degree 2.
%Its adjacency matrix is shown next. % The rows are the vertices and the columns are the edges of the hypergraph.
%
%\bigskip
%
%100000000001
%
%100000000010
%
%010001000000
%
%010010000000
%
%000000101000
%
%000000010100
%
%000011000000
%
%000100100000
%
%001000010000
%
%001000000100
%
%000100001000
%
%000000000011 $\leftarrow$
%
%111100000000 $\leftarrow$
%
%100001100100 $\leftarrow$
%
%100010011000 $\leftarrow$
%
%010000110001
%
%010000001110
%
%001010100010
%
%000110000101
%
%001001001001
%
%000101010010
%
%\medskip
%
%As one can see from lines 12-15, there exists a cover of size 4.
%
%Brendan McKay noticed that this Levi graph has girth 6.
\end{proof}

All these human and computer results together imply

\begin{corollary}
 The smallest number of edges a $5$-uniform intersecting hypergraph with maximum covering number can have is at least $13$.
\end{corollary}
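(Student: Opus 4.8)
The plan is to assemble the corollary purely from the pieces already in place, treating it as a case analysis on the number of edges. The Erdős–Lovász bound gives $q(5)\ge\lceil\frac{8}{3}\cdot 5-3\rceil=11$, so it suffices to rule out $11$ and $12$ edges; once both are excluded, the minimum jumps to at least $13$. The $11$-edge case is already settled by the Proposition above, which shows that \emph{every} $5$-uniform intersecting hypergraph on $11$ edges has a cover of size at most $4$, hence covering number strictly below $5$. So the entire weight of the argument falls on the $12$-edge case.

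For $12$ edges I would first pin down the admissible parameter range. By Observation~\ref{1maxdeg} the maximum degree is at most $4$ (a degree-$5$ vertex would let us greedily build a $4$-cover), and by Observation~\ref{1mindeg} the minimum degree is at least $2$. The hand-shake lemma with maximum degree $4$ then forces at least $15$ vertices, since $14\cdot 4<12\cdot 5$. This splits the remaining work into a finite window and an infinite tail. The finite window, vertex counts $15$ through $20$, is exactly what the exhaustive \texttt{nauty} searches summarised in the table cover: each candidate was generated and checked, and in every case the covering number came out at most $4$ (the final column is $0$ throughout). The infinite tail, at least $21$ vertices, is handled by the Lemma immediately preceding this corollary, whose double-counting of intersecting edge-pairs forces the covering number down to $4$.

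Thus the skeleton is: invoke the Proposition to kill $11$ edges; invoke the degree observations plus the hand-shake lemma to confine the $12$-edge case to $n\ge 15$; read off the table for $15\le n\le 20$ and apply the Lemma for $n\ge 21$ to kill $12$ edges; conclude $q(5)\ge 13$. No new computation or estimate is needed here, since the corollary is genuinely a bookkeeping statement that records where the earlier results collectively land.

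The hard part is not in the logical assembly, which is routine, but in the fact that the argument \emph{rests on} the exhaustive computer searches for $15\le n\le 20$; the credibility of the corollary is only as strong as the completeness of that enumeration. The one subtlety I would be careful to state explicitly is the boundary between the computational window and the analytic tail: I must confirm that the Lemma's double-counting bound genuinely applies for all $n\ge 21$ and that the searches genuinely exhaust all $n$ from the hand-shake minimum $15$ up to $20$, so that no vertex count slips through the gap between the two methods. Once that seam is verified, the two regimes tile the whole range $n\ge 15$ and the exclusion of $12$ edges is complete.
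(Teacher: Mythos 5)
Your proposal is correct and follows exactly the paper's own route: the Erd\H{o}s--Lov\'asz bound reduces the problem to excluding $11$ and $12$ edges, the $11$-edge case is killed by the preceding Proposition, and the $12$-edge case is tiled by the hand-shake bound $n\ge 15$, the exhaustive \texttt{nauty} searches for $15\le n\le 20$, and the double-counting Lemma for $n\ge 21$. The seam you flag between the computational window and the analytic tail is precisely how the paper assembles the corollary, so there is nothing to add.
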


Next we set the number of edges to be 13.
The number of vertices must be at least 17 by the hand-shake lemma, since $64=16*4<5*13=65$.
We made nauty to look for an intersecting hypergraph with 17 vertices and 13 edges.
There were 670914 graphs found in 21.5 days.
Our covering number calculator found in 5 hours that three of the candidates have covering number 5.
We give the adjacency matrix of the first one, $B_1$ say.

\bigskip

1111000000000

1100000000110

1010100100000

1000010001001

1000001110000

0101101000000

0100010100010

0100000011001

0011010010000

0010001001010

0010001000101

0001000101010

0001000100101

0000111000000

0000100010011

0000100001100

0000010010100

\bigskip

For completeness, we argue that the covering number is maximal.

\begin{lemma}
 The above intersecting hypergraph $B_1$ has covering number 5.
\end{lemma}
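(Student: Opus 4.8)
The plan is to show that the hypergraph $B_1$ given by the displayed $17\times 13$ incidence matrix cannot be covered by any $4$ vertices, which combined with the trivial bound $\tau\le r=5$ (any edge is a cover) yields $\tau(B_1)=5$. First I would record the $13$ edges explicitly as subsets of $\{1,\dots,17\}$ by reading off the columns of the matrix, so that the covering condition becomes the statement that no $4$-element vertex set meets all $13$ edges. Equivalently, I would argue that the complements of coverable configurations fail: a set $C$ of $4$ vertices is \emph{not} a cover precisely when some edge is disjoint from $C$, so I must verify that every $4$-subset leaves at least one edge untouched.

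The workhorse of the argument will be a degree/counting reduction rather than a brute check of all $\binom{17}{4}$ subsets. Since $B_1$ is $5$-uniform with $13$ edges, the handshake count gives total degree $65$; I would first establish the degree sequence from the matrix (each row sum is a vertex degree) and note that the maximum degree is $4$, so a single vertex covers at most $4$ edges and any $4$ vertices cover at most $16$ edge-incidences. The key structural facts I would extract are: (i) by Observation~\ref{1maxdeg} with $e=13$ the maximum degree is at most $4$; (ii) after placing any highest-degree vertex $v_1$ into a tentative cover, the residual hypergraph on the uncovered edges is still intersecting, so I can track how many edges remain. The plan is to show that for \emph{every} choice of first vertex $v_1$ (up to the symmetries of $B_1$, reducing the case count), the at-least-$9$ remaining edges cannot be covered by $3$ further vertices, using that covering $9$ intersecting $5$-edges with $3$ vertices would force a vertex of degree $3$ among them plus a pair covering the rest, and then checking this fails.

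Concretely, I would organize the casework by the degree of the first chosen vertex. If $C$ contains a degree-$4$ vertex $v_1$, then $9$ edges remain; I would examine the residual $9$-edge subhypergraph and show no $3$ vertices meet all of them, by identifying two or three edges whose pairwise intersections are forced to be distinct vertices that $3$ points cannot simultaneously hit. If instead $C$ uses only lower-degree vertices, the total covered incidences $\le 16$ drop further and the deficit argument is easier. The main obstacle I expect is that the degree-$4$ case is genuinely combinatorial: four degree-$4$ vertices could in principle cover $16\ge 13$ incidences, so the counting bound alone does not rule out a $4$-cover, and I will need the fine incidence structure of $B_1$ — specifically which triples of edges are pairwise-disjoint-outside-a-common-point — to close each subcase. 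To keep this tractable I would exploit the automorphisms of $B_1$ to cut the number of essentially different first-vertex choices down to a handful, and for each surviving configuration exhibit a single edge disjoint from the proposed cover. This targeted "find an evading edge" step is where the real work lies, but it is finite and short once the edge list and the symmetry group are in hand.
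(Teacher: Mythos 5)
Your plan is sound and would, if carried out, prove the lemma: a $4$-cover must contain a degree-$4$ vertex (only rows $14$, $16$, $17$ of the matrix have degree $3$, and four vertices of degree at most $3$ cover at most $12<13$ edges), and for each such first vertex one is left with $9$ edges to be covered by $3$ vertices, which can be refuted by pigeonhole plus inspection of the matrix. However, your organization differs from the paper's and is considerably less efficient. The paper's pivotal observation is that (apart from a degenerate configuration in which all four cover vertices lie on a common edge) a $4$-cover must contain \emph{two} degree-$4$ vertices whose edge sets are disjoint, i.e., which together cover $8$ distinct edges; $B_1$ has exactly $7$ such pairs ($1$--$15$, $2$--$9$, $3$--$8$, $4$--$6$, $7$--$11$, $10$--$13$, $11$--$12$), so the whole verification collapses to seven checks of the form ``these $5$ residual edges have no $2$-cover.'' Your scheme instead requires one check per degree-$4$ vertex --- and $B_1$ has $14$ of them --- each of the form ``these $9$ residual edges have no $3$-cover,'' a substantially larger task per case. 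Your one concrete misstep is the hope of cutting these cases ``down to a handful'' via the automorphism group of $B_1$: nothing supports the existence of nontrivial automorphisms, and the paper's recurring theme is precisely that these extremal examples are conspicuously unsymmetric (the $4$-uniform one ``is not symmetric by any means,'' and the $5$-uniform examples are ``none of them being some known graph''), so you should expect no reduction from symmetry and plan on all $14$ cases. On the other hand, a genuine advantage of your per-vertex casework is its uniformity: it silently absorbs the awkward configuration of four cover vertices on a common edge (the $4+3+3+3$ pattern), which the paper must dispose of by a separate and rather delicate counting argument before its pair-reduction becomes legitimate.
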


\begin{proof}
 Suppose to the contrary that there is a cover $C=\{v_1,v_2,v_3,v_4\}$ of size 4.
 Since the maximum vertex degree is 4, in principle, there might be two possibilities.
 Either the vertices cover $4+3+3+3$ or $4+4+3+2$ edges in this order.
In the former case, consider three vertices of degree 4 such that there are no two of them covering disjoint edges.
Then either they cover at most 9 lines or $v_1,v_2,v_3$ belong to the same edge, in which case they cover 10 edges.
Hence $v_4$ must cover 3 edges and  $v_1,v_2,v_3,v_4$ belong to the same edge.
Also in the first case, there must be two vertices covering 4 edges each.
 Now we simply have to make a list of pairs of vertices of degree 4 satisfying this.
 In the above matrix, there are 7 such pairs: 1-15, 2-9, 3-8, 4-6, 7-11,10-13, 11-12.
 We can manually check that in each case the remaining 5 edges cannot be covered by two vertices.
\end{proof}

\begin{theorem}
 The smallest number of edges a $5$-uniform intersecting hypergraph with maximal covering number can have is $13$.
\end{theorem}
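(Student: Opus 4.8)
The plan is to prove the two matching inequalities $q(5)\ge 13$ and $q(5)\le 13$ and then combine them. For the lower bound I would invoke the Corollary established above, which already packages all of the exclusion arguments: the Erd\H{o}s--Lov\'asz estimate rules out fewer than $11$ edges, the Proposition rules out exactly $11$ edges by exhibiting a $4$-cover whenever a degree-$4$ vertex occurs, and the $12$-edge case is disposed of by a case split on the number of vertices. Consequently no $5$-uniform intersecting hypergraph with $\tau=5$ can have at most $12$ edges, so $q(5)\ge 13$.

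For the upper bound I would produce a single explicit witness. The hypergraph $B_1$ written out above has $17$ vertices and $13$ edges, and one reads off from its incidence matrix that it is $5$-uniform and intersecting; the preceding Lemma certifies $\tau(B_1)=5$. Hence $B_1$ attains the bound and $q(5)\le 13$. Putting the two inequalities together gives $q(5)=13$.

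The only genuinely difficult ingredient sits in the lower bound, namely the exclusion of $12$ edges, and I expect that to be the main obstacle. The difficulty is that the range of admissible vertex counts is wide and each count conceals a vast family of candidates. My strategy is to squeeze $n$ from both ends: the hand-shake lemma together with maximum degree $4$ forces $n\ge 15$, while for $n\ge 21$ a double-counting argument on intersecting pairs of edges---whose extremal case is the unique pairwise balanced design, the dual of $AG(2,3)$---yields a $4$-cover directly. The leftover band $15\le n\le 20$ does not seem to yield to a purely human argument, so I would clear it by exhaustive generation with \emph{nauty} followed by a covering-number computation; the cost of that computation (several weeks for $n=16$ and $n=17$) is the real bottleneck of the entire result.
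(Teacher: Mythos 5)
Your proposal is correct and mirrors the paper's own argument exactly: the lower bound $q(5)\ge 13$ comes from the Corollary (Erd\H{o}s--Lov\'asz bound for fewer than $11$ edges, the Proposition for $11$ edges, and the combination of the hand-shake bound, the exhaustive nauty search for $15\le n\le 20$, and the double-counting/dual-$AG(2,3)$ lemma for $n\ge 21$ in the $12$-edge case), while the upper bound is witnessed by the explicit $13$-edge hypergraph $B_1$ whose covering number $5$ is certified by the preceding Lemma. Nothing is missing; this is essentially the same proof.
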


\section{Erd\H os-Lovász in projective planes}

In the same paper \cite{erdos-lovasz}, Erd\H os and Lovász studied subset of lines in projective planes also.
They explicitly posed the question whether one needs more than a linear number of lines to ensure the covering number is  maximal. 
Kahn \cite{kahn1} proved that indeed $r \log r$ is the correct order of magnitude.
One may define $m(r)$ to be the minimum number of lines in $PG(2,r-1)$ that cannot be covered by $r-1$ points
\footnote{One may go even further and define a similar function for any projective plane of order $r-1$. 
We do not consider this line here and now.}.
Let us study $m(r)$ for small values\footnote{For these values, there are only Desarguesian planes.} and compare it to $q(r)$.

\underline{$r=3$}:
In Section~\ref{elbev}, we defined the minimal example as part of the Fano plane.
Therefore, $q(3)=m(3)=6$.

We recall our earlier result, which we use in the next proof.

\begin{lemma}[The oval construction \cite{abw}]
 In $PG(2,r-1)$, there exists a set of $\frac{r^2+r}{2}$ lines with covering number $r$.
\end{lemma}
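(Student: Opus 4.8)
The plan is to exhibit the required line set explicitly as the set of \emph{secant and tangent lines of an oval}, and then to read off the covering number almost for free. Concretely, I would fix an oval $\mathcal{O}$ in $PG(2,r-1)$ — for instance an irreducible conic, which has exactly $r$ points, no three collinear, and a unique tangent at each of its points. Let $\mathcal{L}$ be the collection of all lines meeting $\mathcal{O}$; equivalently $\mathcal{L}$ consists of the $\binom{r}{2}$ secants (one per pair of oval points, distinct because no three are collinear) together with the $r$ tangents (one at each oval point). This gives $|\mathcal{L}| = \binom{r}{2} + r = \frac{r(r-1)}{2} + r = \frac{r^2+r}{2}$ lines, as desired, and I would note that this count holds whether $r-1$ is odd or even: in the even case all tangents pass through the nucleus, but there are still exactly $r$ of them.

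For the covering number, the upper bound $\tau \le r$ is immediate, since any two lines of a projective plane meet, so $\mathcal{L}$ is intersecting and a single line of $\mathcal{L}$, viewed as its $r$ points, already covers everything. The heart of the argument is the matching lower bound $\tau \ge r$, and here I would exploit the decisive feature of oval points: through a point $p \in \mathcal{O}$ every line of the plane meets $\mathcal{O}$ (at $p$), so all $r$ lines through $p$ are secants or the tangent at $p$, hence all lie in $\mathcal{L}$. Suppose for contradiction that some $C$ with $|C| \le r-1$ covers $\mathcal{L}$. Since $|\mathcal{O}| = r > |C|$, there is an oval point $p_0 \notin C$. The $r$ lines through $p_0$ pairwise intersect only in $p_0$, so each point of $C$ (none equal to $p_0$) lies on at most one of them; therefore $C$ meets at most $|C| \le r-1$ of the $r$ lines through $p_0$, leaving one line of $\mathcal{L}$ uncovered — a contradiction. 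Hence $\tau = r$.

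The step I expect to be the genuine obstacle is resisting the temptation to prove the lower bound by the aggregate double-counting used elsewhere in this paper. Counting incidences between a putative $(r-1)$-point cover and the lines of $\mathcal{L}$ is too weak: an oval point meets all $r$ lines of $\mathcal{L}$ through it, so $r-1$ oval points alone supply $(r-1)r$ incidences, which comfortably exceeds $|\mathcal{L}| = \frac{r^2+r}{2}$, and the necessary inequalities one extracts this way turn out to be mutually consistent rather than contradictory. The real leverage is \emph{local}: fixing a single uncovered oval point $p_0$ and applying the pigeonhole principle to the $r$ lines through it, which pairwise share only $p_0$. The remaining care is routine bookkeeping — confirming that an oval of $r$ points with the stated tangent structure exists in $PG(2,r-1)$ (irreducible conics serve for every prime power $r-1$) and that the secants and tangents exhaust the lines meeting $\mathcal{O}$.
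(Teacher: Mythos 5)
Your proof is correct and is essentially the intended argument: the paper states this lemma without proof, citing \cite{abw}, and the construction there (as the name suggests) is exactly the set of secants and tangents of an oval, with the lower bound obtained by the same pigeonhole argument applied to the pencil of $r$ lines through an oval point omitted by a putative $(r-1)$-cover. Both your count $\binom{r}{2}+r=\frac{r^2+r}{2}$ and your handling of the even-order case (tangents concurrent at the nucleus, but still $r$ distinct lines) are accurate, so there is nothing to fix.
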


\underline{$r=4$}:
Notice the example we found in subsection~\ref{el4} contained two lines with intersection size two.
Therefore, we need to study lines of $PG(2,3)$ more closely.
What is the smallest subset of lines in $PG(2,3)$ that has covering number 4?
The oval construction gives a set of 10 lines with covering number 4.
To complement this example, we prove the following 

\begin{lemma}
 Any $9$ lines of $PG(2,3)$ can be covered by $3$ points.
\end{lemma}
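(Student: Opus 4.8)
The plan is to work with the four omitted lines rather than the nine chosen ones. Write $PG(2,3)$ for the plane, with $13$ points and $13$ lines, each point on $4$ lines and each line carrying $4$ points, and let $T=\{t_1,t_2,t_3,t_4\}$ be the four lines not in our chosen set $S$ of nine lines. A triple of points covers $S$ precisely when every line avoiding all three points lies in $T$. Two shapes of point-triples are relevant: three points on a common line $\ell$ cover $1+3\cdot 3=10$ lines, leaving uncovered exactly the three lines through the fourth point of $\ell$ other than $\ell$; three points in general position (a triangle) cover $3+3\cdot 2=9$ lines, leaving four uncovered. So it suffices to produce, in each case, a point-triple whose uncovered lines all belong to $T$.

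First I would dispose of the case in which some three lines of $T$ are concurrent, say at a point $x$. Then at least three of the four lines through $x$ lie in $T$; let $\ell$ be a line through $x$ \emph{not} in $T$ if one exists, and otherwise any line of $T$ through $x$. Taking the three points of $\ell$ other than $x$ as the cover, the only uncovered lines are the remaining lines through $x$, all of which lie in $T$ by construction. Hence every line of $S$ is covered, and this case is routine.

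The main case, and the crux of the argument, is when no three lines of $T$ are concurrent: then $T$ is a complete quadrilateral, its $\binom{4}{2}=6$ vertices are distinct, and it has three diagonal points. I claim these three diagonal points cover all nine lines of $S$. The clean way to see this is via the self-duality of $PG(2,3)$: the dual statement asserts that for four points $Q$ in general position (no three collinear), the three lines of the \emph{diagonal triangle} of the complete quadrangle $Q$ together contain every point except the four points of $Q$. Since $PGL(3,3)$ is transitive on quadruples of points in general position, it suffices to verify this for the standard frame $Q=\{(1,0,0),(0,1,0),(0,0,1),(1,1,1)\}$, whose diagonal triangle is cut out by $x=y+z$, $y=x+z$, $z=x+y$; a direct check over $\mathbb{F}_3$ shows these three lines cover exactly the nine points of $PG(2,3)\setminus Q$. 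Dualizing returns the desired statement about the three diagonal points and the nine lines of $S$.

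The step I expect to be delicate is this general-position case: one must pick the right triple (the diagonal triangle of the quadrilateral $T$), and the verification secretly relies on $PG(2,3)$ having characteristic $\neq 2$, so that the three diagonal points are genuinely non-collinear and the diagonal triangle is non-degenerate; in characteristic $2$ (the Fano configuration) the diagonal points collapse onto a line and the count breaks. Reducing everything to a single coordinate computation through the transitivity of $PGL(3,3)$ is what keeps this case finite and painless. Together with the oval construction, which already exhibits $10$ lines of covering number $4$, this pins $9$ as the threshold.
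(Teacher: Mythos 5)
Your proof is correct, but it takes a genuinely different route from the paper's. You classify the configuration of the four \emph{omitted} lines $T$: either some three of them are concurrent at a point $x$, in which case the three points other than $x$ of a line $\ell$ through $x$ (chosen outside $T$ when possible) leave uncovered only lines of $T$; or $T$ is a complete quadrilateral, in which case its three diagonal points cover exactly the nine lines of $S$ --- verified by dualizing to the diagonal triangle of a quadrangle, invoking transitivity of $PGL(3,3)$ on frames, and one coordinate check over $\mathbb{F}_3$ (which indeed shows the lines $x=y+z$, $y=x+z$, $z=x+y$ cover precisely the nine points off the standard frame). The paper instead classifies by the maximum degree of the nine-line hypergraph: if some point lies on four of the nine lines, it argues ad hoc that the remaining five lines cannot all have point-degree at most two inside $PG(2,3)$, then covers greedily with $4+3+2$ lines; if the maximum degree is three, the four removed lines must cover every point of the plane, hence form a pencil through a point $F$, and the three points other than $F$ on a removed line give the cover. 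The paper's pencil case is exactly the sub-case of your concurrent case in which all four lines of $T$ pass through $x$, and there you argue identically; everything else is organized differently. What your approach buys is structural clarity and a verification that is finite by design: self-duality plus frame-transitivity compress the main case to a single computation, and you exhibit the extremal cover explicitly (the diagonal triangle), together with the correct observation that this mechanism is special to odd characteristic. What the paper's approach buys is elementarity and uniformity with its other arguments (degree counting, greedy covering), at the cost of an ``essentially only one way'' embeddability claim that is less self-contained than your coordinate check.
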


\begin{proof}
 Let $H$ be a set of 9 lines in $PG(2,3)$.
 We consider the vertex degrees.
 If we find a vertex $v_1$ of degree 4 in $H$, then we can find a cover of size 3 as follows.
 Let $I$ denote the hypergraph formed by the remaining 5 lines of $H$. 
 We may assume the first 3 lines form a triangle $ABC$. 
 How can we add 2 more lines to create $I$ without a vertex of degree 3?
 We must use the remaining points on the sides of the triangle: $A_1,A_2,B_1,B_2,C_1,C_2$.
 There is essentially only one way to do it. Let the new lines be: $A_1,B_1,C_1,D$ and $A_2,B_2,C_2,D$.
 However, this configuration is not part of a projective plane.
 We cannot add lines through $A_2,C_1$ without violating the axioms.
 
 In what follows, we assume that $H$ has maximum degree 3.
 How can one remove 4 lines of $PG(2,3)$ to ensure that each vertex degree is decreasing?
 There is only one way to do so: we have to remove a pencil of lines through a fixed point $F$.
 What remains now is the dual of the affine plane $AG(2,3)$.
 However these 9 lines can be covered by the 3 other points on a line through $F$.
\end{proof}

\begin{corollary}
 $m(4)=10$.
\end{corollary}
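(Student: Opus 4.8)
The plan is to read the claim $m(4)=10$ as the conjunction of a matching upper and lower bound, each of which is already in hand from the two preceding results; the task is therefore purely to assemble them, with a single point of care.

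First, for the upper bound $m(4)\le 10$, I would invoke the oval construction with $r=4$. It produces a set of $\frac{r^2+r}{2}=\frac{16+4}{2}=10$ lines of $PG(2,3)$ whose covering number is exactly $r=4$. By the definition of covering number, such a family cannot be covered by $r-1=3$ points, so there is a collection of $10$ lines that is not $3$-coverable. Since $m(4)$ is the minimum size of a family that cannot be covered by $3$ points, this witness gives $m(4)\le 10$ immediately.

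Second, for the lower bound $m(4)\ge 10$, I would invoke the Lemma that any $9$ lines of $PG(2,3)$ can be covered by $3$ points. The only gap between the statement ``any $9$ lines'' and the bound I actually need, namely ``any family of at most $9$ lines,'' is closed by a short monotonicity argument. Since $PG(2,3)$ has $r^2-r+1=13$ lines in total, any family $S$ of at most $9$ lines can be extended to a family $S'$ of exactly $9$ lines by adjoining arbitrary further lines. A $3$-cover of $S'$ guaranteed by the Lemma is \emph{a fortiori} a $3$-cover of $S\subseteq S'$. Hence no family of at most $9$ lines can fail to be $3$-coverable, which is exactly $m(4)\ge 10$.

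I expect no serious obstacle here, since both inequalities are consequences of results established earlier in the paper. The only step requiring genuine attention is the padding argument in the lower bound, where one must confirm that $PG(2,3)$ has enough lines ($13>9$) to extend an arbitrary small family up to size $9$; everything else is a direct appeal to the oval construction and the Lemma. Combining the two bounds yields $m(4)=10$.
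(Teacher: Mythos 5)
Your proof is correct and takes essentially the same route as the paper, which derives $m(4)=10$ directly by combining the oval construction (a $10$-line witness with covering number $4$) with the lemma that any $9$ lines of $PG(2,3)$ admit a $3$-point cover. The padding/monotonicity step you spell out for families of fewer than $9$ lines is a detail the paper leaves implicit, and it is handled correctly.
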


\underline{$r=5$}: We have to work in $PG(2,4)$. There are 21 lines. The oval construction gives an example of size $15$.
Can we do better?\\
Using nauty, it is possible to remove an edge of a hypergraph. It is also a basic feature to test isomorphism.
Therefore, we made the following simple-minded plan.
We know the hypergraph $PG(2,q)$ has $q^2+q+1$ lines and it has covering number $q+1$.
We perform the following three-step algorithm:\\
(i) Delete an edge from the current hypergraph in all possible ways such that the minimum degree of each vertex remains at least 2. \\
(ii) Check isomorphism and only keep non-isomorphic examples. \\
(iii) Determine the covering number. Keep only the maximal ones. \\
We stop if $\tau<q+1$ for all examples.

Clearly this determines $m(r)$. Asymptotically it must be much better than the oval construction. 
We wondered how far we can go with our resources.

It takes only a few minutes to consider subhypergraphs of $PG(2,4)$.
We found that there are five non-isomorphic examples with 14 edges. Two of them have covering number 5.
On the other hand, there are three non-isomorphic examples with 13 edges. Each of them has covering number smaller than 5. 
Therefore, 
\begin{corollary}
 $m(5)=14$.
\end{corollary}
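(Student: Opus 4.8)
The plan is to prove the two inequalities $m(5)\le 14$ and $m(5)\ge 14$ separately. For the upper bound I would exhibit an explicit set of $14$ lines of $PG(2,4)$ and verify that no $4$ points meet all of them, so its covering number is $5$. Since the oval construction already gives $15$ such lines, a natural first try is to delete one line from that set and check that the covering number does not drop; failing that, one records a configuration returned by the search. As there are only $\binom{21}{4}$ candidate covers, this verification is a finite, routine computation.

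For the lower bound the key is monotonicity: adjoining a line to a subhypergraph never decreases its covering number, while in $PG(2,4)$ every subhypergraph has $\tau\le 5$ because any single line is already a cover. Hence the line-subsets with $\tau=5$ form an up-set, so a hypothetical $13$-line subhypergraph with $\tau=5$, after adding any one of the remaining lines, would yield a $14$-line subhypergraph with $\tau=5$; equivalently, it is a single-line deletion of some $14$-line example with $\tau=5$. By Observation~\ref{1mindeg} every such example has minimum degree at least $2$, so at each deletion step I may discard any subhypergraph having a degree-$1$ vertex without losing cases. I would therefore run the three-step deletion procedure downward from the full plane ($21$ lines, $\tau=5$), keeping at every level the complete list of non-isomorphic subhypergraphs with $\tau=5$ and minimum degree at least $2$. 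This yields all $14$-line examples with $\tau=5$; deleting one line from each of them in every way and computing the covering number shows that each resulting $13$-line subhypergraph has $\tau\le 4$. Therefore no $13$-line subhypergraph attains $\tau=5$, giving $m(5)\ge 14$ and hence $m(5)=14$.

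The main obstacle is guaranteeing that the enumeration of $14$-line examples is genuinely exhaustive: the lower bound is only as strong as the claim that the examples found constitute every line-subset of that size with $\tau=5$, up to isomorphism. This rests on the monotone down-closure argument above together with reliable isomorphism rejection in step (ii) of the procedure. A purely human alternative would bound $\tau$ for all $13$-line subhypergraphs by double counting the $\binom{13}{2}=78$ incident line-pairs against $\sum_v\binom{d_v}{2}$; but this runs into a delicate greedy-covering analysis—one maximum-degree point leaves $8$ lines, and naive pairing only gives $\tau\le 5$—so I expect the computer-assisted route, anchored by monotonicity, to be the cleaner path.
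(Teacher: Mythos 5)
Your proposal is correct and follows essentially the same route as the paper: a downward single-line-deletion search from the full plane $PG(2,4)$ with isomorphism rejection, covering-number computation, and pruning of subhypergraphs having a degree-$1$ vertex, which finds the $14$-line examples with $\tau=5$ and verifies that every $13$-line candidate has $\tau\le 4$. Your explicit up-set/monotonicity argument (together with Observation~\ref{1mindeg} making the min-degree pruning lossless) is exactly the justification the paper compresses into ``Clearly this determines $m(r)$,'' so the two proofs coincide in substance.
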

Here we give the point-line incidence matrix for one example:

\medskip

11000000000000

00110000000000

00001100000000

00000011110000

00000000001111

00000010001000

00100001000100

00001000100010

00010100010001

00000000100001

00100000010010

00000110000100

00011001001000

10001000010100

10100100101000

10000001000001

10010010000010

01000101000010

01101010000001

01000000011000

01010000100100

\bigskip

\underline{$r=6$}: We have to work in $PG(2,5)$. There are 31 lines. 
The oval construction gives an example of size $21$.
How much better can we get?
Using our simple-minded algorithm and nauty, we determined in 72 minutes, 
there are 130 subhypergraphs of $PG(2,5)$ with 21 edges, 112 of them has maximal covering number.
In one more step, we found 178 subhypergraphs with 20 edges in 56 minutes, 99 of them has maximal covering number.
Next, we found 207 subhypergraphs with 19 edges in 42 minutes, 23 of them has maximal covering number.
Finally, there is a unique example with 18 edges and maximal covering number.

\begin{corollary}
In $PG(2,5)$, there is a unique set of 18 lines that cannot be covered with fewer than $6$ points: $m(6)=18$.
\end{corollary}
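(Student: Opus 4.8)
The plan is to reduce the statement to the exhaustive backward-deletion search described above and to justify rigorously why that search cannot miss any extremal configuration. The starting point is that every subhypergraph $H$ of $PG(2,5)$ is automatically $6$-uniform and intersecting, since any two lines of a projective plane meet in exactly one point; hence $\tau(H)\le 6$, with equality for the full plane. The decisive structural fact is monotonicity of the covering number under inclusion: if $H'\subseteq H\subseteq PG(2,5)$, then any cover of $H$ covers $H'$, so $\tau(H')\le\tau(H)$. Consequently the family of subhypergraphs with $\tau=6$ is upward closed inside $PG(2,5)$: if $\tau(H')=6$ and $H'\subseteq H\subseteq PG(2,5)$, then $6=\tau(H')\le\tau(H)\le 6$, forcing $\tau(H)=6$.

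First I would use this squeeze to prove completeness of the search. Given any $\tau=6$ subhypergraph $H'$ with $k$ edges, choose any chain $PG(2,5)=H_{31}\supset H_{30}\supset\cdots\supset H_k=H'$ of single-edge deletions; every intermediate $H_i$ satisfies $H'\subseteq H_i\subseteq PG(2,5)$, and hence $\tau(H_i)=6$. Therefore a backward search that starts from $PG(2,5)$, repeatedly deletes one edge, and retains only the examples with $\tau=6$ (up to isomorphism, via nauty) reaches a representative of every isomorphism class of $\tau=6$ subhypergraph at every edge count. Moreover, by Observation~\ref{1mindeg} a maximal-covering-number example has minimum degree at least $2$, and by the same squeeze every $H_i$ along the chain has $\tau=6$ and hence minimum degree at least $2$; this legitimizes the pruning in step~(i) that discards deletions creating a degree-$1$ vertex, since no $\tau=6$ example is ever thereby lost.

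With completeness established, I would run the search from the full plane downward. The oval construction already gives a set of $21$ lines with $\tau=6$, so $m(6)\le 21$, and it suffices to keep deleting. Each level is processed by (i) deleting one edge in all ways from the current $\tau=6$ representatives, (ii) reducing modulo isomorphism, and (iii) recomputing $\tau$ and keeping the maximal ones. The reported outcome is $130\to112$ examples at $21$ edges, $178\to99$ at $20$, $207\to23$ at $19$, and a single $\tau=6$ example $H^*$ at $18$. Uniqueness is then global rather than merely search-local: by the squeeze, any $18$-edge $\tau=6$ subhypergraph lies on a deletion chain from $PG(2,5)$ and so appears in the search, whence $H^*$ is the unique such configuration up to isomorphism. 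Finally, to see $m(6)>17$, note that any $17$-edge $\tau=6$ example would, upon adding back one line, yield an $18$-edge $\tau=6$ example — necessarily $H^*$ — and hence be one of the $18$ single-edge deletions of $H^*$; the search verifies that all of these have $\tau\le 5$. Thus no $\tau=6$ example has fewer than $18$ edges, giving $m(6)=18$ with $H^*$ unique.

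The main obstacle is not the logic, which the monotonicity argument renders routine, but the reliability and feasibility of the computation: one must trust nauty's isomorphism reduction and, more delicately, the covering-number calculator, whose correctness underwrites both the uniqueness at $18$ edges and the emptiness at $17$. I expect the $\tau$ computation to be the genuine bottleneck, since certifying that $31$ points admit no $5$-point cover is a moderately sized combinatorial search repeated over the many surviving candidates at each level, which matches the authors' per-level timings. A purely human certificate for the hard direction would require exhibiting $H^*$ explicitly, verifying by hand that it has no $5$-point cover, and checking its $18$ single-edge deletions — a finite but tedious task, which is precisely why the computational route is preferred.
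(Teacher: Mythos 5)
Your proposal is correct and takes essentially the same route as the paper: a backward single-edge-deletion search from $PG(2,5)$ with nauty isomorphism reduction and covering-number checks at each level, relying on the same reported counts ($130/112$ at $21$ edges, $178/99$ at $20$, $207/23$ at $19$, one survivor at $18$) and on the stopping criterion to rule out $17$ edges. The only difference is that you make explicit the monotonicity/upward-closure argument (and the legitimacy of the minimum-degree pruning via Observation~\ref{1mindeg}) that guarantees the search is exhaustive, which the paper compresses into the remark ``Clearly this determines $m(r)$.''
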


111000000000000000

000110000000000000

000001110000000000

000000001110000000

000000000001111100

000000000000000011

000001001001000000

000100000100100010

000010000010010000

000000100000001001

000000010000000100

100000100000100000

100101000000010000

100010001000000101

100000010011000010

100000000100001000

010000010100010001

010100100010000100

010011000000001010

010000001000100000

010000000001000000

000000000010001000

000100000001000001

000010010000100000

000001000100000100

000000101000010010

001000000000000110

001100011000001000

001010100101000000

001000000000010000

001001000010100001

\medskip

Since there is no projective plane of order 6, the next value is 7.
The oval construction suggests that $m(8)$ is smaller than $\frac{7^2+7}{2}=28$.
However, due to storage constraints, we could only go exhaustively to PG(2,7)-13 edges.
To find a configuration smaller than 28 edges requires someone with more resources or a different idea.

\section*{Final remarks}

In some geometric problems, it makes better sense to define a {\it blocking set} of an 
$r$-uniform intersecting hypergraph as a cover, which does not contain an edge of $H$.
There is a function similar to $q(r)$ defined as follows.
We define an $r$-uniform intersecting hypergraph $H$ to be {\it maximal}, if there is no blocking set of size at most $r$.
Every cover of size at most $r$ is an edge.
Let $q^*(r)$ be the minimum number of edges in a maximal $r$-uniform intersecting hypergraph.
The original Erdős-Lovász lower bound of $\frac{8}{3}r-3$ is still valid.
There was a small improvement on this by Dow et al. \cite{3k}.
They proved $q^*(r)\ge 3r$ if $r\ge 4$.
However, their proof idea does not work for $q(r)$.
They also determined the following exact value: $q^*(4)=12$, which is in contrast to $q(4)=9$.

\section*{Acknowledgement}

We thank Máté Bárány for stimulating discussions and writing the covering number calculator for us.
We thank Brendan McKay for all help using nauty, Marston Conder and Gordon Royle for valuable remarks.
We thank the two anonymous reviewers for pointing out reference \cite{tri}, the table in p.269 of the 
Handbook of Combinatorial Designs, and their constructive criticism that improved the presentation.

\end{document}